\documentclass[11pt]{amsart}
\usepackage{amscd}
\usepackage{amsmath}
\usepackage{amssymb}
\usepackage{latexsym}

\newtheorem{thm}{Theorem}[]
\newtheorem{prop}[thm]{Proposition}
\newtheorem{lem}[thm]{Lemma}
\newtheorem{lem-def}[thm]{Lemma-Definition}

\theoremstyle{definition}

\newtheorem{rmk}{Remark}[]

\newcommand{\frakb}{{\mathfrak b}}

\newcommand{\frakg}{{\mathfrak g}}

\newcommand{\frakl}{{\mathfrak l}}

\newcommand{\frakn}{{\mathfrak n}}

\newcommand{\frakp}{{\mathfrak p}}

\newcommand{\fraks}{{\mathfrak s}}
\newcommand{\frakt}{{\mathfrak t}}

\newcommand{\bbC}{{\mathbb C}}

\newcommand{\bbG}{{\mathbb G}}

\newcommand{\bbP}{{\mathbb P}}

\newcommand{\calF}{{\mathcal F}}

\newcommand{\calO}{{\mathcal O}}

\newcommand{\Ad}{{\mathrm{Ad}}}
\newcommand{\Aut}{{\mathrm{Aut}}}
\newcommand{\Der}{{\mathrm{Der}}}

\newcommand{\Fl}{{\calF\ell}}
\newcommand{\Ga}{{\mathrm{Ga}}}
\newcommand{\Gr}{{\mathrm{Gr}}}

\newcommand{\Lie}{{\mathrm{Lie}}}

\newcommand{\spec}{{\mathrm{Spec}}}

\newcommand{\ord}{{\mathrm{ord}}}

\textwidth=14.5cm \oddsidemargin=1cm \evensidemargin=1cm

\newcommand{\nc}{\newcommand}
\nc{\on}{\operatorname} \nc{\ch}{\mbox{ch}} \nc{\Z}{{\mathbb Z}}
\nc{\C}{{\mathbb C}} \nc{\pone}{{\mathbb P}^1} \nc{\pa}{\partial}
\nc{\F}{{\mathcal F}} \nc{\arr}{\rightarrow}
\nc{\larr}{\longrightarrow} \nc{\al}{\alpha} \nc{\ri}{\rangle}
\nc{\lef}{\langle} \nc{\W}{{\mathcal W}} \nc{\la}{\lambda}
\nc{\ep}{\epsilon} \nc{\su}{\widehat{{\mathfrak s}{\mathfrak
l}}_2} \nc{\sw}{{\mathfrak s}{\mathfrak l}} \nc{\g}{{\mathfrak g}}
\nc{\h}{{\mathfrak h}} \nc{\n}{{\mathfrak n}}
\nc{\N}{\widehat{\n}} \nc{\G}{\widehat{\g}} \nc{\De}{\Delta}
\nc{\gt}{\widetilde{\g}} \nc{\one}{{\mathbf 1}}
\nc{\z}{{\mathfrak Z}} \nc{\La}{\Lambda} \nc{\wt}{\widetilde}
\nc{\wh}{\widehat} \nc{\cri}{_{\kappa_c}} \nc{\kk}{h^\vee}
\nc{\sun}{\widehat{\sw}_N} \nc{\si}{\sigma} \nc{\el}{\ell}
\nc{\bi}{\bibitem} \nc{\om}{\omega} \nc{\ol}{\overline}
\nc{\ds}{\displaystyle} \nc{\dzz}{\frac{dz}{z}}
\nc{\Res}{\on{Res}} \nc{\mc}{\mathcal} \nc{\Cal}{\mathcal}
\nc{\bb}{{\mathfrak b}} \nc{\ot}{\otimes} \nc{\R}{{\mc R}}
\nc{\yy}{{\mc Y}} \nc{\ga}{\gamma}

\nc{\us}{\underset} \nc{\opl}{\oplus} \nc{\beq}{\begin{equation}}
\nc{\Fq}{{\mathcal F}} \nc{\Mq}{{\mathcal M}} \nc{\Rep}{\on{Rep}}
\nc{\sssec}{\subsubsection} \nc{\ssec}{\subsection}
\nc{\lan}{\langle} \nc{\ran}{\rangle}

\nc{\D}{\mathcal D} \nc{\Vect}{\on{Vect}} \nc{\ghat}{\G}
\nc{\T}{\mc T} \nc{\Tloc}{\T^\g_{\on{loc}}} \nc{\vac}{|0\ran}
\nc{\Wick}{{\mb :}} \nc{\mb}{\mathbf} \nc{\delz}{\partial_z}
\nc{\K}{{\cali K}} \nc{\cali}{\mathcal} \nc{\li}{\mathfrak l}
\nc{\lt}{\widetilde{\li}} \nc{\astar}{a^*} \nc{\cA}{{\mc A}}
\nc{\ka}{\kappa}

\nc{\OO}{{\mc O}} \nc{\AutO}{\on{Aut}\OO} \nc{\DerO}{\on{Der}\OO}
\nc{\DerpO}{\on{Der}_+\OO} \nc{\Au}{{\mc A}ut} \nc{\mf}{\mathfrak}
\nc{\V}{{\mathbb V}} \nc{\hh}{\wh{\h}}

\nc{\pp}{{\mathfrak p}} \nc{\mm}{{\mathfrak m}}
\nc{\rr}{{\mathfrak r}} \nc{\ket}{\rangle} \nc{\zz}{{\mathfrak z}}
\nc{\gr}{\on{gr}} \nc{\Spe}{\on{Spec}} \nc{\rv}{\crho}
\nc{\can}{\on{can}} \nc{\CC}{\on{Op}_G(D))} \nc{\Op}{\on{Op}_G(D)}
\nc{\MOp}{\on{MOp}_G(D)} \nc{\Db}{{\mathbb D}} \nc{\ww}{w}

\nc{\af}{{\mathbb A}^1} \nc{\bs}{\backslash} \nc{\laa}{(\la_i)}
\nc{\zn}{(z_i)}

\nc{\cla}{\check{\la}} \nc{\cmu}{\check{\mu}}
\nc{\crho}{\check{\rho}} \nc{\chal}{\check{\al}}
\nc{\cc}{{\mathfrak c}}

\nc{\M}{{\mathbb M}}

\nc{\ZZ}{{\mc Z}}

\nc{\UU}{{\mathbb U}}

\nc{\Conn}{\on{Conn}(\Omega^{\crho})}
\nc{\Con}{\on{Conn}(\Omega^{-\rho})}
\nc{\Co}{\on{Conn}(\Omega^{\rho})}

\nc{\ppart}{(\!(t)\!)} \nc{\zpart}{(\!(z)\!)}
\nc{\ppzi}{(\!(t-z_i)\!)} \nc{\ppinf}{(\!(t^{-1})\!)}
\nc{\Ind}{\on{Ind}} \nc{\I}{{\mathbb I}} \nc{\ppars}{(\!(s)\!)}
\nc{\QCoh}{\on{QCoh}}

\nc{\aff}{{\on{aff}}}

\begin{document}

\title{Any flat bundle on a punctured disc has an oper
structure}\thanks{Supported by DARPA and AFOSR through the grant
FA9550-07-1-0543}

\author{Edward Frenkel}

\address{Department of
Mathematics, University of California, Berkeley, CA 94720, USA}

\author{Xinwen Zhu}

\date{November 2008}

\begin{abstract}
We prove that any flat $G$-bundle, where $G$ is a complex connected
reductive algebraic group, on the punctured disc admits the structure
of an oper. This result is important in the local geometric Langlands
correspondence proposed in \cite{FG}. Our proof uses certain
deformations of the affine Springer fibers which could be of
independent interest. As a byproduct, we construct representations of
affine Weyl groups on the homology of these deformations generalizing
representations constructed by Lusztig.
\end{abstract}

\maketitle

\section{Introduction}

Let $G$ be a connected reductive algebraic group over $\bbC$,
$\frakg=\mathrm{Lie}(G)$. Let $F=\bbC\ppart$ and $\OO=\bbC[[t]]$.
In this note we prove that every flat $G$-bundle on the formal
punctured disc $D^\times=\spec F$ has an oper structure.  This
proves Conjecture 10.1.1 of \cite{Fr} (see also \cite{Fr2},
Conjecture 1).

By definition, a {\em flat $G$-bundle} (equivalently, de Rham $G$-local
system) on $D^\times$ is a principal $G$-bundle on $D^\times$ with a
connection, which is automatically flat. In concrete terms, the set of
isomorphism classes of flat $G$-bundles is the quotient
\begin{equation}\label{loc}
\mathrm{Loc}_G(D^\times)=\frakg(F)/G(F),
\end{equation}
where $G(F)$ acts on its Lie algebra $\g(F)$ by gauge transformations
as follows:
\begin{equation}\label{Gauge transformation}
\Ga_g(A)=\Ad_g(A)-(\partial_t g)g^{-1}, \ \ \mbox{ for }
A\in\frakg(F), g\in G(F).
\end{equation}
The meaning of the expression 
\begin{equation}\label{dlog} 
d\log(g):=(\partial_tg)g^{-1}
\end{equation} 
as an element in
$\frakg(F)$ is spelled out, e.g., in \cite{Fr} \S 1.2.4.

Let $B\subset G$ be a Borel subgroup. We recall \cite{BD} that a
$G$-{\em oper} is a flat $G$-bundle with a reduction to $B$
satisfying certain conditions. Let us describe the set of
isomorphism classes of $G$-opers on $D^\times$ in concrete terms.
Choose a maximal torus $T\subset B$ and let $\frakt\subset\frakb$
be the corresponding inclusion of Lie algebras. Let $I_{\on{f}}$
be the set of vertices in the finite Dynkin diagram corresponding
to $G$. Let $\alpha_i\in\frakt^*, i\in I_{\on{f}}$ be the set of
simple roots and $X_{-\alpha_i}\in\frakg_{-\alpha_i}$ be a
non-zero root vector corresponding to $-\alpha_i$. (Here, for a
root $\beta\in\frakt^*$, we write $\frakg_\beta$ for the
corresponding root subspace of $\frakg$.) Then the space of
$G$-opers on $D^\times$ is the quotient
\begin{equation}\label{oper form}
\mathrm{Op}_G(D^\times)=\left.\left\{ \left.\sum_{i \in I_{\on{f}}}
\psi_iX_{-\alpha_i} +v \; \right| \; \psi_i\in F^\times,
v\in\frakb(F)\right\}\right/B(F),
\end{equation}
where the action of $B(F)$ is given by (\ref{Gauge
transformation}). Note that if $G$ is semisimple of adjoint type, then
$T(F)$ acts simply transitively on the space of the $\psi_i, i \in
I_{\on{f}}$. Hence the quotient \eqref{oper form} is isomorphic to
\begin{equation}\label{oper form1}
\mathrm{Op}_G(D^\times)=\left.\left\{
\left.\sum_{i \in I_{\on{f}}} X_{-\alpha_i} + v \; \right| \;
v\in\frakb(F)\right\}\right/N(F),
\end{equation}
where $N = [B,B]$ is the unipotent radical of $B$.

There is an obvious forgetful map
\begin{equation}    \label{map}
\mathrm{Op}_G(D^\times)\to\mathrm{Loc}_G(D^\times),
\end{equation}
taking the $B(F)$-gauge equivalent classes to $G(F)$-gauge
equivalent classes.

The main result of this note is

\begin{thm}    \label{main}
The map \eqref{map} is surjective.
\end{thm}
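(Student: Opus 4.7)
The plan is to reformulate Theorem \ref{main} as the non-emptiness of an ``oper Springer fiber'' and then establish non-emptiness via a deformation and homology argument. For $A \in \frakg(F)$, set
\[
\on{Sp}^{\on{op}}_A \;=\; \{gB(F) \in G(F)/B(F) : \Ga_g(A) \in \calO^{\on{op}}\},
\]
where $\calO^{\on{op}} \subset \frakg(F)$ denotes the affine subspace appearing on the right-hand side of \eqref{oper form}. Theorem \ref{main} is equivalent to the assertion that $\on{Sp}^{\on{op}}_A$ is non-empty for every $A$. This is a strengthening of the classical affine Springer fiber condition, in which one requires only $\Ga_g(A) \in \frakb(F)$; the extra constraint is that the projection of $\Ga_g(A)$ to the root spaces $\frakg_{-\alpha_i}(F)$ has all components nonzero, with no component in any non-simple negative root direction.

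I would approach non-emptiness by embedding $\on{Sp}^{\on{op}}_A$ into a one-parameter family of deformed affine Springer fibers $\on{Sp}^\epsilon_A$ for $\epsilon \in \bbA^1$, whose special fiber at $\epsilon = 0$ recovers $\on{Sp}^{\on{op}}_A$ and whose generic fiber at $\epsilon \neq 0$ is an honest affine Springer fiber --- for instance, the one obtained by replacing $A$ with a regular element such as $A + \epsilon \sum_i X_{-\alpha_i}$, so that the principal nilpotent leading term is inserted by hand. Non-emptiness at generic $\epsilon$ is within reach of standard technology: one can use Kostant's section together with the Bruhat decomposition of $G(F)$ to produce an explicit witness $g$. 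The key new input is to construct an action of the affine Weyl group on $H_*(\on{Sp}^\epsilon_A)$, generalizing Lusztig's action on Springer fiber homology, and to show this action is non-trivial; a semi-continuity argument on finite-type Schubert substacks of the affine flag variety $G(F)/B(F)$ then forces $H_*(\on{Sp}^{\on{op}}_A) \neq 0$, hence $\on{Sp}^{\on{op}}_A \neq \emptyset$.

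The main obstacle is the non-regular regime, in particular when $A$ is nilpotent or otherwise degenerate, where the classical affine Springer fibers have infinite dimension and a naive limit argument fails. Resolving this requires constructing the deformation $\on{Sp}^\epsilon_A$ carefully enough that the affine Weyl group representation on its total homology is detectable in the limit, and verifying that the non-triviality of this representation (rather than, say, properness of the family) is what forces non-emptiness at $\epsilon = 0$. This is precisely where the abstract's new technology --- deformations of affine Springer fibers carrying affine Weyl group symmetries generalizing Lusztig's construction --- is essential, and it constitutes the principal technical engine of the argument.
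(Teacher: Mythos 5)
There is a genuine gap, and it sits exactly at the step you identify as the ``principal technical engine.'' Your plan is to specialize a family $\on{Sp}^\epsilon_A$ from $\epsilon\neq 0$ to $\epsilon=0$ and deduce non-emptiness of the special fiber from non-vanishing of homology at generic $\epsilon$ via ``semi-continuity.'' But these fibers are non-proper (ind-)schemes inside the affine flag variety, and neither non-emptiness nor non-vanishing of Borel--Moore homology specializes in such a family; you acknowledge that properness is unavailable but offer no substitute mechanism, and I do not see one. Moreover the specific deformation $A+\epsilon\sum_i X_{-\alpha_i}$ does not do what you want: since $\ord(A)=r<-1$, adding a constant term leaves the leading coefficient $A_r$ unchanged, so the generic fiber is no more regular than the special one. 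The paper's argument contains no specialization at all. Its ``deformation'' is in a different direction --- it deforms the \emph{action} (interpolating between the adjoint action, giving the classical affine Springer fiber, and the gauge action $\Ga_g$, giving the object actually needed) --- and it works directly with the gauge-action fiber $Y_A\subset\Gr$, which is trivially non-empty; the issue is never non-emptiness but the existence of a \emph{regular point}.

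Two further ingredients are missing from your outline and are what make the paper's argument close. First, a reduction lemma: if the leading coefficient of $\Ga_g(A)$ is regular nilpotent, then a further gauge transformation by the first congruence subgroup $G^{(1)}(\OO)$, built inductively from the decomposition $\frakg=\frakg^e+\mathrm{ad}f(\frakg)$ for a principal $\fraks\frakl_2$-triple, puts $A$ in oper form. This replaces your oper Springer fiber in $G(F)/B(F)$ --- whose defining conditions include open non-vanishing constraints on the $\psi_i$ and which is therefore ill-suited to homological arguments --- by the closed condition of being a regular point of $Y_A$. Second, finite-dimensionality of $Y_A$ (proved via the Babbitt--Varadarajan canonical form of connections), which you correctly flag as failing for classical affine Springer fibers of degenerate elements but do not resolve; it holds precisely because of the $d\log$ term in the gauge action. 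Given these, the conclusion is reached not by a limit argument but by the Kazhdan--Lusztig component count: if $Y_A$ had no regular point, every top-dimensional component $X$ of $X_A$ would be a union of fibers of some projection $\pi_i:\Fl\to\Fl_i$, forcing $(\mathrm{Id}+T_{s_i})[X]=0$ and hence $\bigl(\sum_{w\in W_{\on{f}}}T_w\bigr)V_A=0$, contradicting the existence of a non-zero $W_{\on{f}}$-invariant vector in the $W_{\aff}$-stable subspace $V_A\subset H_{2d}(\Fl)$. Your instinct to use an affine Weyl group action on homology is the right one, but it must be deployed in this component-theoretic way on the actual (gauge-deformed) fiber, not as a semi-continuity device across a family.
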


This statement is important in the local geometric Langlands
correspondence developed by D. Gaitsgory and the first author
\cite{FG} (see \cite{Fr} for an exposition). According to
\cite{FG}, to each flat $^L G$-bundle $\sigma$ on $D^\times$ one
should be able to assign a category ${\mc C}_\sigma$ equipped with
an action of the formal loop group $G(F)$ (here $^L G$ is the
Langlands dual group of $G$, which in this paragraph is assumed to
be a simply-connected semisimple complex algebraic group, so that
$^L G$ is of adjoint type). These categories should satisfy some
universality property. In \cite{FG} a candidate for ${\mc C}_\sigma$
was proposed. Namely, let $\chi$ be a pre-image of $\sigma$ in
$\mathrm{Op}_{^L G}(D^\times)$ under the map \eqref{map}, with $G$
replaced by $^L G$ (provided that it exists). Then ${\mc
C}_\sigma$ should be equivalent to the category of modules over
the affine Kac--Moody algebra $\widehat{\g}$ of critical level
with central character determined by $\chi$. This category is
equipped with a natural action of $G(F)$. However, for this
prescription to work for all $\sigma$ it is necessary for the map
\eqref{map} to be surjective.

\begin{rmk}\label{cyclic} A flat $GL_n$-bundle on $D^\times$ is the same as
a rank $n$ vector bundle $\calF$ on $D^\times$ with a connection
$\nabla$. $(\calF,\nabla)$ has an oper structure if and only if there
exists $\phi \in\Gamma(D^\times,\calF)$ such that $\phi,\nabla
\phi,\ldots,\nabla^{n-1}\phi$ generate $\calF$. Such $\phi$ is called
a cyclic vector of $(\calF,\nabla)$.  Therefore, the statement of
Theorem \ref{main} for $G=GL_n$ means that any flat rank $n$ vector
bundle on $D^\times$ has a cyclic vector.  This statement is proved in
\cite{D}, pp. 42--43.
\end{rmk}

\begin{rmk}    \label{kostant}
Let us recall Kostant's theorem \cite{Ko}. Set $f=\sum\limits_{i \in
I_{\on{f}}} X_{-\alpha_i}$. Kostant proved that every {\em regular}
orbit of $\frakg$ intersects with $f+\frakb$. In other words, the map
$$
\left\{ f + v \; | \; v \in {\mathfrak b} \right\}/N \to
  \g^{\on{reg}}/G
$$
is surjective (in fact, an isomorphism), where $\g^{\on{reg}}/G$
denotes the GIT quotient. Therefore, Theorem \ref{main} may be
viewed as an analogue of Kostant's theorem for connections on the
punctured disc (compare with formula \eqref{oper form1}). An
important difference is that a connection can be brought into an
oper form {\em without any regularity assumption}.
\end{rmk}

\begin{rmk} The statement analogous to Theorem \ref{main} for a
smooth projective curve $X$ of genus greater than zero is known to be
false. For instance, if $G$ is of adjoint type, there is a unique (up
to an isomorphism) $G$-bundle on $X$ that can carry an oper structure
(see \cite{BD} \S 3.5). However, it is expected that any flat
$G$-bundle on $X$ has an oper structure with regular singularities at
finitely many points.
\end{rmk}

\noindent{\bf Acknowledgments.} We thank D. Arinkin for suggesting a
simpler proof of Proposition \ref{f.d.}. E.F. thanks Fondation
Sciences Math\'ematiques de Paris for its support and the group
``Algebraic Analysis'' at Universit\'e Paris VI for
hospitality. X.Z. thanks Zhiwei Yun for useful discussions.

\section{Proof of the main theorem in the case when $A_r$ is regular
  nilpotent}

We begin our proof of Theorem \ref{main}. Let $A\in\frakg(F)$.
By taking $r$ small enough, we can always assume that $A$ may be written as
\[A=A_rt^r+A_{r+1}t^{r+1}+\cdots,  \quad\quad r<-1, \quad A_r \mbox{
  is nilpotent. }\]
Here $A_r$ can be zero. First, we have
\begin{lem}\label{reduction} If $A_r$ is regular nilpotent, then
  there exists some $g\in G(\OO)$ such that $\Ga_g(A)$ is an oper.
\end{lem}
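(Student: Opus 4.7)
The plan is in two steps.

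\emph{Step 1 (normalize the leading term).} Since all regular nilpotent elements of $\frakg$ are $G$-conjugate, pick a constant $g_0\in G\subset G(\OO)$ with $\Ad_{g_0}(A_r)=f:=\sum_{i\in I_{\on{f}}}X_{-\al_i}$. Because $\partial_t g_0=0$, the gauge action reduces to conjugation, so we may assume $A=f t^r+A_{r+1}t^{r+1}+\cdots$ from the start.

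\emph{Step 2 (clean higher coefficients inductively).} Decompose $\frakg=\n^-_{\geq 2}\oplus\n^-_1\oplus\frakb$, where $\n^-_1:=\bigoplus_{i\in I_{\on{f}}}\bbC X_{-\al_i}$ and $\n^-_{\geq 2}$ is the sum of the remaining negative root spaces. An element of $\frakg(F)$ fits the form $\sum_i\psi_iX_{-\al_i}+v$ of \eqref{oper form} precisely when its $\n^-_{\geq 2}$-component vanishes; the invertibility of the $\psi_i$ will then be automatic, since the leading $\n^-_1$-part stays $f t^r$. Starting from $g^{(0)}=1$, inductively set $g^{(m)}=\exp(h_mt^m)\,g^{(m-1)}$ with $h_m\in\n^-$ chosen to eliminate the $\n^-_{\geq 2}$-component of the $t^{r+m}$-coefficient of $\Ga_{g^{(m)}}(A)$. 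A direct expansion using \eqref{Gauge transformation} shows that passing from $g^{(m-1)}$ to $g^{(m)}$ shifts the $t^{r+m}$-coefficient by $-[f,h_m]$ while fixing all earlier coefficients: the derivative term $-m h_m t^{m-1}$ sits at degree $m-1>r+m$ (using $r<-1$), strictly higher than the degree being corrected, and the same holds for the higher commutators. Since $\exp(h_mt^m)\equiv 1\pmod{t^m}$, the sequence $g^{(m)}$ converges $t$-adically to some $g\in G(\OO)$.

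The crux is a structural lemma on the principal nilpotent $f$: the restriction $[f,\cdot]\colon\n^-\to\n^-_{\geq 2}$ is surjective. It is well-defined because $[f,\frakg_{-\beta}]\subset\bigoplus_i\frakg_{-\beta-\al_i}$ lies in root spaces of height $\leq-2$ for every positive root $\beta$. For surjectivity, when $\frakg$ is semisimple the kernel is $\frakg^f$, and the principal $\sw_2$-decomposition (together with positivity of the exponents) places $\frakg^f$ inside $\n^-$ with dimension $\on{rank}(\frakg)=\dim\n^--\dim\n^-_{\geq 2}$, matching the codimension of $\n^-_{\geq 2}$ in $\n^-$. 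The reductive case reduces to the semisimple one by splitting off the center, which lies in $\h\subset\frakb$ and plays no role. This structural lemma is the main point of difficulty, but it is classical; combined with the iterative procedure above, it yields $g\in G(\OO)$ with $\Ga_g(A)$ in oper form.
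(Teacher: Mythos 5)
Your proof is correct and takes essentially the same route as the paper: an inductive gauge transformation by elements $\exp(h_mt^m)$, exploiting $r<-1$ so that the $(\partial_tg)g^{-1}$ term lands in degree strictly above the coefficient being corrected, with the linear-algebra input supplied by principal $\fraks\frakl_2$-theory. The only (cosmetic) difference is that the paper corrects each coefficient into $\frakg^e$ via the decomposition $\frakg=\frakg^e+\mathrm{ad}f(\frakg)$, arriving at the sharper normal form $ft^r+\frakg^e(F)$, whereas you only annihilate the components in the negative root spaces of height $\leq -2$ via the surjectivity of $\mathrm{ad}f$ from $\frakn^-$ onto that subspace.
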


\begin{proof} Without loss of generality, we can, and will, assume
  that $A_r = f = \sum_{i \in I_{\on{f}}} X_{-\al_i}$. Let $e \in {\mathfrak
  b}$ be the unique element such that $\{e,2\check{\rho},f\}$ is a
  principal $\fraks\frakl_2$-triple. Let $\frakg^e$ be the centralizer
  of $e$ in $\frakg$. Let $G^{(1)}(\OO)$ be the first congruence
  subgroup of $G(\OO)$, i.e. the kernel of the evaluation map
  $G(\OO)\to G$. We prove that there is some $g\in G^{(1)}(\OO)$ such
  that $\Ga_g(A)\in ft^r+\frakg^e(F)$, which is in the oper form.

According to representation theory of $\fraks\frakl_2$, we have
$\frakg=\frakg^e+\mathrm{ad}f(\frakg)$. Therefore, there exists
$X_1\in\frakg$ such that $A_{r+1}+[X_1,f]\in\frakg^e$. Let
$g_1=\exp(tX_1)$. Since $r<-1$,
\[\Ga_{g_1}(A)=ft^r+(A_{r+1}+[X_1,f])t^{r+1}+\wt{A}_{r+2}t^{r+2}+
\cdots.\]
Next, there exists some $X_2\in\frakg$ such that
$\wt{A}_{r+2}+[X_2,f]\in\frakg^e$. Let $g_2=\exp(t^2X_2)$. Again,
since $r<-1$,
\[\Ga_{g_2}(\Ga_{g_1}(A))=ft^r+(A_{r+1}+[X_1,f])t^{r+1}+
(\wt{A}_{r+2}+[X_2,f])t^{r+2}+\wt{A}_{r+3} t^{r+3}+\cdots\]
By induction, we can find $g_1,\ldots,g_{k-1}$ such that the
coefficients of $t^{r+1},\ldots,t^{r+k-1}$ of
$\Ga_{g_{k-1}}\cdots\Ga_{g_1}(A)$ are in $\frakg^e$. Let
$\wt{A}_{r+k}$ be the coefficient of $t^{r+k}$ in
$\Ga_{g_{k-1}}\cdots\Ga_{g_1}(A)$. Let $X_k\in\frakg$ such that
$\wt{A}_{r+k}+[X_k,f]\in\frakg^e$ and let $g_k=\exp(t^kX_k)$. Then
the coefficient of $t^{r+k}$ in $\Ga_{g_k}\cdots\Ga_{g_1}(A)$
belongs to $\frakg^e$, while the coefficients of
$t^r,\ldots,t^{r+k-1}$ remain unchanged. Let $g=\cdots g_k\cdots
g_2g_1$. This is a well-defined element in $G^{(1)}(\OO)$ which
satisfies the requirement of the lemma.
\end{proof}

\begin{rmk} Let $A_r$ be an arbitrary regular element of $\g$. By
  Kostant's theorem (see Remark \ref{kostant}), we can assume, without
  loss of generality, that $A_r = f + v, v \in {\mathfrak b}$. By a
  slight modification of the above argument, we can then also prove
  that there exists $g\in G^{(1)}(\OO)$ such that $\Ga_g(A)$ is an
  oper. Thus, we obtain a simple proof of the statement of Theorem
  \ref{main} in the case when the leading term $A_r$ is regular. The
  real challenge is to prove that it holds even without this
  assumption.\qed
\end{rmk}

By the previous lemma, in order to prove Theorem \ref{main} it
suffices to prove that there exists $g\in G(F)$ such that
$B=\Ga_g(A)=B_rt^r+B_{r+1}t^{r+1}+\cdots$, with $B_r$ regular
nilpotent. Recall that we are under the assumption $r<-1$. The rest of
this paper is devoted to proving this fact.

\section{Deformed affine Springer fibers}

If $ A=A_rt^r+A_{r+1}t^{r+1}+\cdots$ with $A_r\neq 0$, we call $r$
the {\em order} of $A$, and sometimes denote it by $\ord(A)$. Let
\[M_A=\{g\in G(F) \; | \; \ord(\Ga_{g^{-1}}(A))\geq\ord(A)=r\}.\]
This is a subset of elements $g$ of $G(F)$ which is the set of
solutions of certain algebraic equations on the coefficients of
$g$. Hence it is clear that it is the set of points of an
ind-subscheme of $G(F)$. It is clearly invariant under the right
multiplication by elements of the subgroup $G(\OO)$. Therefore the
quotient
\[Y_A:=M_A/G(\OO)\]
is a well-defined closed ind-subscheme of the affine Grassmannian
$\Gr=G(F)/G(\OO)$. We call it the {\em deformed affine Springer fiber}
associated to $A$.

\medskip

Let us explain this terminology. Set $\wt{A}=t^{-r}A\in\frakg(\OO)$.
For $\la \in \C$, let
\[Y_{\wt{A},\lambda}=\{g\in G(F),
\Ad_{g^{-1}}(\wt{A})-\lambda
t^{-r}d\log(g^{-1})\in\frakg(\OO)\}/G(\OO),\] 
where $d\log(g)$ is defined as in \eqref{dlog}. Then
$Y_{\wt{A},1}=Y_A$, and $Y_{\wt{A},0}$ is the affine Springer
fiber of $\wt{A}$ defined by Kazhdan and Lusztig in \cite{KL} (see
also \cite{GKM}). Let us first show that

\begin{lem}\label{leading term}
For any $gG(\OO)\in Y_A$,
\[(\Ad_{g^{-1}}(\wt{A})-t^{-r}d\log(g^{-1}) \mod
t)\in\frakg(\OO)/t\frakg(\OO)=\frakg\] is nilpotent.
\end{lem}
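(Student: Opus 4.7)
The plan is to reduce to an explicit computation via the Cartan decomposition of the loop group. By Cartan, $G(F)=\bigsqcup_\la G(\OO)\,t^\la\,G(\OO)$, so every coset $gG(\OO)\in\Gr$ has a representative of the form $g=k\,t^\la$ with $k\in G(\OO)$ and $\la$ a dominant coweight of $T$. Since $k^{-1}\in G(\OO)$, the gauge transformation $A\mapsto A':=\Ga_{k^{-1}}(A)$ preserves the order $r$ and merely conjugates the leading term, so $A'_r=\Ad_{k(0)^{-1}}(A_r)$ is again nilpotent. In view of $\Ga_{g^{-1}}=\Ga_{t^{-\la}}\circ\Ga_{k^{-1}}$, and the identity $\Ad_{g^{-1}}(\wt A)-t^{-r}d\log(g^{-1})=t^{-r}\Ga_{g^{-1}}(A)$, it suffices to show that $B_r$ is nilpotent for $B:=\Ga_{t^{-\la}}(A')$.

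Next I would write $A'=\sum_\beta A'_\beta$ in the root-space decomposition (with $\beta=0$ the Cartan part) and compute
$$B=\sum_\beta t^{-\langle\la,\beta\rangle}A'_\beta+\la\,t^{-1}.$$
The hypothesis $g\in M_A$, i.e.\ $\ord(B)\ge r$, amounts to $\ord(A'_\beta)\ge r+\langle\la,\beta\rangle$ for every root $\beta$. For any $\beta$ with $\langle\la,\beta\rangle>0$ this forces $(A'_r)_\beta=0$; hence $A'_r$ lies in the opposite parabolic $\frakp^-(\la):=\frakl(\la)\oplus\frakn^-(\la)$, where $\frakl(\la)$ is the Levi centralizing $\la$ and $\frakn^\pm(\la):=\bigoplus_{\pm\langle\la,\beta\rangle>0}\frakg_\beta$. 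Inspecting the coefficient of $t^r$ in $B$ (the term $\la t^{-1}$ contributes nothing because $r<-1$) yields
$$B_r=\sum_{\langle\la,\beta\rangle=0}(A'_r)_\beta+\sum_{\langle\la,\beta\rangle>0}\bigl(\text{coefficient of }t^{r+\langle\la,\beta\rangle}\text{ in }A'_\beta\bigr)\;\in\;\frakp^+(\la),$$
whose Levi component coincides with the Levi projection $\pi_\frakl(A'_r)$ of $A'_r$.

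Finally, since $\frakn^-(\la)$ is a nilpotent ideal of $\frakp^-(\la)$, the quotient map $\frakp^-(\la)\twoheadrightarrow\frakl(\la)$ is a Lie algebra homomorphism, and therefore sends the nilpotent $A'_r$ to a nilpotent $\pi_\frakl(A'_r)\in\frakl(\la)$. A standard fact about parabolic subalgebras---namely, that an element of $\frakp^+(\la)$ is nilpotent in $\frakg$ if and only if its Levi projection is nilpotent, as one sees by choosing a faithful representation in which $\frakp^+(\la)$ acts by block upper-triangular matrices---then implies that $B_r$ is nilpotent in $\frakg$, as required.

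The main obstacle I anticipate is the order-bookkeeping in the second step: both the leading coefficient of $A'$ and certain subleading coefficients in the $\frakn^+(\la)$-directions contribute to $B_r$. The key observation is that every subleading contribution lands in $\frakn^+(\la)$, so only the Levi projection of $A'_r$ survives in the Levi part of $B_r$---and that projection is nilpotent because it is the image of a nilpotent element under a Lie algebra homomorphism.
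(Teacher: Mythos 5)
Your proof is correct and follows essentially the same route as the paper: reduce via the Cartan/Birkhoff decomposition to the case $g=t^{\pm\check\lambda}$ modulo $G(\OO)$ on either side, then use the weight decomposition of $\frakg$ with respect to $\check\lambda$ to see that the leading terms before and after lie in opposite parabolics and share the same Levi component, which is nilpotent. The paper phrases this with a two-sided representative $g=g_1t^{\check\lambda}g_2$ and the grading $\frakg=\sum_i\frakg_i$, but the argument is the same, including the key observation that an element of $\frakl(\la)\oplus\frakn^{\pm}(\la)$ is nilpotent iff its Levi part is.
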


\begin{proof} Let $T$ be the maximal torus of $G$ whose Lie algebra
is $\frakt$. Let $X_*(T)$ be the coweight lattice of $T$ and
$X_*(T)_+$ be semi-group of dominant coweights. Each
$\check{\lambda}\in X_*(T)$ defines a point
$t^{\check{\lambda}}\in T(F)\subset G(F)$. We have the Birkhoff
decomposition
$$G(F)=\bigsqcup\limits_{\check{\lambda}\in
X_*(T)_+}G(\OO)t^{\check{\lambda}} G(\OO).$$ Let $g\in G(F)$ be as
in the lemma. We can write it as $g=g_1t^{\check{\lambda}}g_2$ for
$g_1,g_2\in G(\OO)$ and a dominant coweight $\check{\lambda}$.
Then we have
\[\Ga_{g_2^{-1}}(B)=\Ga_{t^{\check{\lambda}}}\Ga_{g_1}(A).\]
It is clear that
\[\begin{split}
&C=\Ga_{g_1}(A)=C_rt^r+C_{r+1}t^{r+1}+\cdots,\\
&D=\Ga_{g_2^{-1}}(B)=D_rt^r+D_{r+1}t^{r+1}+\cdots,
\end{split}\]
with $C_r$ nilpotent. We need to show that $D_r$ is nilpotent.

Let $\frakg=\sum\limits_i\frakg_i$ be the weight decomposition of
$\frakg$ with respect to $\check{\lambda}$. Then $\frakg_{\geq
0}:=\sum_{i\geq 0}\frakg_i$ is a parabolic subalgebra of $\frakg$,
and $\frakg_{>0}:=\sum_{i>0}\frakg_i$ is its nil-radical and
$\frakg_0$ is a Levi subalgebra. Similarly, one has $\frakg_{\leq
0}$ and $\frakg_{<0}$. We observe that
$X=X_0+X_{>0}\in\frakg_0+\frakg_{>0}$ (resp.
$X=X_0+X_{<0}\in\frakg_0+\frakg_{<0}$) is nilpotent in $\frakg$ if
and only if $X_0$ is nilpotent in $\frakg_0$.

Now since $D=\Ga_{t^{\check{\lambda}}}(C)$, we know that
$C_r\in\frakg_{\geq 0}$ and $D_r\in\frakg_{\leq 0}$. Furthermore,
if we decompose $C_r=C'+C''$ with
$C'\in\frakg_0,C''\in\frakg_{>0}$ and $D=D'+D''$ with
$D'\in\frakg_0, D''\in\frakg_{<0}$, then $C'=D'$. Since $C_r$ is
nilpotent, $C'=D'$ is nilpotent. Therefore $D_r$ is nilpotent.
\end{proof}

By Lemma \ref{reduction}, Theorem \ref{main} holds if there is a
point $gG(\OO)\in Y_A$ such that the above element is regular
nilpotent. Such a point $gG(\OO)$ is called a {\em regular point}
of $Y_A$ (cf. \cite{GKM2}). Therefore, the main theorem follows
from

\begin{thm}\label{regular point} If $A_r$ is nilpotent (equivalently,
$\wt{A} \mod t$ is nilpotent) and $r\leq -2$, then $Y_A$ has a regular
point.
\end{thm}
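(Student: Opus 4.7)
The plan is to produce an explicit gauge transformation $g \in G(F)$ witnessing that $Y_A$ contains a regular point, by induction on the closure order of nilpotent $G$-orbits on the leading term.

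The inductive step uses gauge transformations of the form $g = \exp(t^{r+1} X)$ for $X \in \g$ to be determined. A direct calculation gives $d\log(g^{-1}) = -(r+1)\,t^r X$, so $-t^{-r} d\log(g^{-1}) = (r+1) X$ adds a nonzero scalar multiple of $X$ to the leading coefficient of $\Ga_{g^{-1}}(A)$. Expanding $\mathrm{Ad}_{g^{-1}}(\widetilde{A})$ as a power series in $t^{r+1}$ via the adjoint action, the coefficient of $t^r$ in $\Ga_{g^{-1}}(A)$ becomes $A_r + (r+1) X$ modified by contributions from the iterated commutators $(\mathrm{ad}\,X)^n(\widetilde{A})$ for $n \geq 2$. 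Since $r \leq -2$, these iterated commutators also appear at orders $n(r+1) \leq r$, so for $g \in M_A$ their negative-order parts must vanish; these conditions cut out a subvariety $V \subset \g$ of admissible $X$.

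To analyze $V$, I would fix an $\sw_2$-triple $\{e=A_r, h, f\}$ containing $A_r$ (using Jacobson-Morozov, trivial in case $A_r = 0$), decompose $\g$ into $h$-weight spaces, and start by imposing $X \in \g^{A_r}$ to eliminate the $n = 1$ contribution at negative orders. The higher-$n$ cancellation constraints could then be solved weight-by-weight, using the surjectivity of $\mathrm{ad}\,f$ on the appropriate weight components of $\g$. With such $V$ in hand, the plan is: if $A_r$ is already regular nilpotent, Lemma \ref{reduction} applies; otherwise, find $X \in V$ so that $A_r + (r+1) X$ (together with the higher corrections) lies in a strictly larger nilpotent orbit, apply the resulting gauge transformation, normalize by a further element of $G(\mathcal O)$, and iterate. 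Because the closure poset on nilpotent orbits of $\g$ is finite and well-founded, finitely many iterations terminate at the regular nilpotent orbit.

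The principal obstacle is verifying that at each inductive stage the admissible locus $V$ is rich enough to yield a strictly more regular leading term. The commutator constraints for $n \geq 2$ are nonlinear in $X$ and intertwine infinitely many orders in $t$, so a purely Lie-theoretic construction is delicate; an alternative and probably cleaner route is to exploit the ind-scheme geometry of $Y_A$ as a deformed affine Springer fiber, reducing the existence of a regular point to a dimension or irreducibility statement about $Y_A$, presumably in line with the affine Weyl group representations on the homology of $Y_A$ advertised in the abstract.
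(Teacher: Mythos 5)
Your main line of attack has a genuine gap at exactly the step you yourself flag as ``the principal obstacle,'' and that step is not a technicality --- it is the entire difficulty of the theorem. For $g=\exp(t^{r+1}X)$ to lie in $M_A$, all coefficients of $\Ga_{g^{-1}}(A)$ in degrees $<r$ must vanish; the most singular of these constraints (at order $2r+1<r$) forces $[X,A_r]$ to vanish up to higher corrections, and for $r\le -3$ further constraints involving $A_{r+1},A_{r+2},\dots$ pile up. So, as you note, you are essentially restricted to $X\in\g^{A_r}$ (cut down further by nonlinear conditions), and the leading coefficient gets shifted by $(r+1)X$ with $X$ in the centralizer of $A_r$. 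You give no argument that such a shift moves $A_r$ into a strictly larger nilpotent orbit, nor that the corrections from the $n\ge 2$ terms and from $A_s$, $s>r$, do not undo whatever is gained; without that, the well-founded induction on orbit closures never gets off the ground. Remark \ref{cyclic} is a warning sign here: for $GL_n$ the theorem is equivalent to Deligne's cyclic vector theorem, so one should not expect a single elementary gauge transformation (or a short chain of them, one per orbit) to suffice --- the $g$ realizing a regular point is in general a genuinely nontrivial point of the ind-scheme $Y_A$.

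Your closing sentence correctly guesses the shape of the actual argument but supplies none of its content. The paper deduces Theorem \ref{regular point} from two substantive inputs: Proposition \ref{f.d.}, that $Y_A$ is finite-dimensional (proved by bounding the tangent spaces via a lattice estimate for the connection $\partial_t+\mathrm{ad}(B)$, using the Babbitt--Varadarajan canonical form rather than Deligne's good lattices, precisely to avoid circularity with the cyclic vector theorem); and Proposition \ref{action of affine Weyl group}, that the image of $H_*(X_A)\to H_*(\Fl)$ is $W_{\aff}$-stable (proved by realizing $X_A^{\on{red}}$ as the fixed locus of a $\bbG_a$-action coming from $(\wt{A},t^{-r}\partial_t)\in\Lie(I^0\rtimes\Aut^+(D))$ --- this is where the hypothesis $r\le -2$ enters --- and applying the Kazhdan--Lusztig involutions $\sigma_i$). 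Granting these, the Kazhdan--Lusztig counting argument with $T=\sum_{w\in W_{\on{f}}}T_w$ shows that if $Y_A$ had no regular point, every top-dimensional component class of $X_A$ would be killed by $T$, contradicting the existence of a nonzero $W_{\on{f}}$-invariant vector in $V_A$. None of these three ingredients appears in your proposal, so it does not constitute a proof.
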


Let us interpret this theorem more geometrically. Let $I$ be the Iwahori
subgroup of $G(F)$, i.e. the pre-image of $B\subset G$ under the
evaluation map $G(\OO)\to G$, and $\Fl=G(F)/I$ be the affine
flag variety. Without loss of generality, we can assume that
$\wt{A}\mod t\in\frakn$, where $\frakn$ is the nil-radical of
$\frakb$. Let
\[X_A=\{g\in G(F) \; | \; \Ad_{g^{-1}}(\wt{A})-t^{-r}d\log(g^{-1})\in
\Lie I\}/I\in\Fl.\] There is a natural projection $\pi:X_A\to
Y_A$. The following lemma is clear.

\begin{lem}\label{fiber} A point $p=gG(\OO)\in Y_A$ is a regular point if and
only if $\pi^{-1}(p)$ consists of a single point.
\end{lem}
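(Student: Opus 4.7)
The plan is to identify the fiber $\pi^{-1}(p)$ set-theoretically with the classical Springer fiber
\[
\calB_{X_0} := \{\bar k B \in G/B : \Ad_{\bar k^{-1}}(X_0) \in \frakb\}
\]
of the nilpotent element $X_0 := (\Ad_{g^{-1}}(\wt{A}) - t^{-r}d\log(g^{-1}))\mod t \in \frakg$ (nilpotent by Lemma \ref{leading term}), so that the lemma reduces to the classical fact that $\calB_{X_0}$ is a single point exactly when $X_0$ is regular nilpotent.

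To carry this out, set $X := \Ad_{g^{-1}}(\wt{A}) - t^{-r}d\log(g^{-1}) \in \frakg(\OO)$, so $X \mod t = X_0$. A point of $\pi^{-1}(p)$ is a coset $hI$ with $h = gk$ for some $k \in G(\OO)$; using the cocycle identity $d\log(uv) = d\log(u) + \Ad_u(d\log(v))$, the condition $hI \in X_A$ becomes
\[
\Ad_{k^{-1}}(X) - t^{-r}d\log(k^{-1}) \in \Lie I = \frakb + t\frakg(\OO).
\]
Since $k \in G(\OO)$ implies $d\log(k^{-1}) \in \frakg(\OO)$, the hypothesis $r \leq -2$ then gives $t^{-r}d\log(k^{-1}) \in t^{-r}\frakg(\OO) \subseteq t\frakg(\OO)$. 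Reducing modulo $t$ therefore collapses the displayed condition to $\Ad_{\bar k^{-1}}(X_0) \in \frakb$, where $\bar k \in G$ denotes the image of $k$ under the evaluation map. As this reduced condition depends only on $\bar k$, any lift of an admissible $\bar k$ to $G(\OO)$ produces a point of the fiber.

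Via the standard isomorphism $G(\OO)/I \cong G/B$, this yields a bijection of $\bbC$-points between $\pi^{-1}(p)$ and $\calB_{X_0}$, from which the lemma follows by the Springer-theoretic characterisation of regular nilpotents. The main point in the argument --- not so much an obstacle as a key observation --- is the order estimate $t^{-r}d\log(k^{-1}) \in t\frakg(\OO)$, which uses $r \leq -2$ essentially: without this input, the ``$d\log$ correction'' would survive modulo $t$ and the fiber would merely be a deformation of $\calB_{X_0}$, obscuring the clean link with regularity of $X_0$.
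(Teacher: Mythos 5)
Your argument is correct and is exactly the reduction the authors have in mind when they declare the lemma ``clear'': using the cocycle identity for $d\log$ and the bound $t^{-r}d\log(k^{-1})\in t\frakg(\OO)$ (valid since $r\le -2$), the fiber $\pi^{-1}(p)$ is identified with the classical Springer fiber of the nilpotent element $X_0$, which is a single point precisely when $X_0$ is regular. No gaps.
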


Observe that to prove Theorem \ref{regular point}, it is enough to prove that $Y_A\cap\Gr^0$ has a regular
point, where $\Gr^0$ is the neutral component of $\Gr$.  Let
$\tilde{G}$ be the simply-connected cover of the derived group of $G$,
and write $A=A_0+A_1$, where $A_0\in\Lie(Z(G)^0)(F)$ ($Z(G)^0$ being
the neutral component of the center $Z(G)$ of $G$) and
$A_1\in\Lie(\tilde{G})(F)$. Then $Y_A\cap\Gr^0$ is (topologically)
isomorphic to $Y_{A_1}$, and $X_A\cap\Fl^0$ is (topologically) isomorphic to $X_{A_1}$. In addition, the projection $\Fl^0\to\Gr^0$ is
(topologically) isomorphic to the map
$\Fl_{\tilde{G}}\to\Gr_{\tilde{G}}$, where $\Fl_{\tilde{G}}$
(resp. $\Gr_{\tilde{G}}$) denotes the affine flag variety (resp.
affine Grassmannian) of $\tilde{G}$. Therefore, the map $X_A\cap\Fl^0\to Y_A\cap\Gr^0$ is (topologically) isomorphic to $X_{A_1}\to Y_{A_1}$. Then according to Lemma \ref{fiber}, it is sufficient to
prove Theorem \ref{regular point} for connected simply-connected
semisimple algebraic groups. Hence, from now on, we will assume that
$G$ is a connected simply-connected semisimple algebraic group.

An analogous statement for non-deformed affine Springer fibers has
been proved in \cite{KL} \S 4. By imitating their proof, we find that
it is sufficient to prove two propositions. The first one is the
following:

\begin{prop}\label{f.d.} $Y_A$ is finite-dimensional.
\end{prop}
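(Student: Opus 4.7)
The plan is to show that $Y_A$ is contained in a finite union of Schubert cells in the affine Grassmannian $\Gr$. Using the Cartan decomposition $\Gr = \bigsqcup_{\check\lambda \in X_*(T)_+} \Gr^{\check\lambda}$, where $\Gr^{\check\lambda} = G(\OO)t^{\check\lambda}G(\OO)/G(\OO)$ has dimension $\langle 2\rho, \check\lambda\rangle < \infty$, it suffices to prove that $Y_A \cap \Gr^{\check\lambda}$ is empty for $\check\lambda$ outside some bounded subset of dominant coweights depending only on $A$.

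For any $gG(\OO) \in Y_A$, write $g = g_1 t^{\check\lambda} g_2$ with $g_1, g_2 \in G(\OO)$. First I would observe that $G(\OO)$-gauge preserves $t^r\frakg(\OO)$ (since $r\leq -1$: $\Ad_h$ preserves valuation, and $d\log(h) \in \frakg(\OO) \subset t^r\frakg(\OO)$), so the defining condition $\Ga_{g^{-1}}(A) \in t^r\frakg(\OO)$ of $M_A$ is equivalent, by exactly the calculation in the proof of Lemma \ref{leading term}, to the existence of some $g_1 \in G(\OO)$ with $\Ga_{t^{-\check\lambda}}\Ga_{g_1^{-1}}(A) \in t^r\frakg(\OO)$. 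Writing $C := \Ga_{g_1^{-1}}(A) = \sum_{i\geq r} C_i t^i$ and decomposing each $C_i = \sum_k C_{i,k}$ into $\on{ad}\check\lambda$-eigenspaces $\frakg_k^{\check\lambda}$, the constraint unpacks to $C_{i,k} = 0$ for all $k > i-r$, i.e.\ $C_i \in \bigoplus_{k \leq i-r} \frakg_k^{\check\lambda}$ for every $i \geq r$. (The correction $\check\lambda/t$ coming from $d\log(t^{-\check\lambda})$ has order $-1 \geq r$ since $r \leq -2$, lies in the Cartan, and is therefore absorbed by the condition.)

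The main obstacle is showing these weight constraints become simultaneously unrealizable once $\langle\theta,\check\lambda\rangle$ is large, where $\theta$ is the highest root. As $\check\lambda$ grows in the dominant cone, the constraint on $C_i$ for small $i$ forces $C_i$ into an increasingly restrictive negative-weight parabolic subalgebra $\frakg^{\check\lambda}_{\leq i-r}$. I would proceed by an iterative order-by-order analysis, mirroring the construction in Lemma \ref{reduction} but now obstructing rather than constructing $g_1$: the fixed nilpotent leading term $A_r \neq 0$ and the recursive nature of the $G(\OO)$-gauge action together prevent the coefficients $C_r, C_{r+1}, \ldots$ from all being pushed into these deep parabolics at once once $\langle\theta,\check\lambda\rangle$ exceeds a bound depending on $A$ and $-r$.

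An alternative, and probably simpler, approach likely closer to the one Arinkin suggested is to study the natural map $Y_A \to t^r\frakg(\OO)/G(\OO)$ sending $gG(\OO) \mapsto [\Ga_{g^{-1}}(A)]$. The target is a finite-dimensional algebraic stack (of framed $G$-connections on the formal disc with pole of order $\leq -r$, modulo gauge), the image lies in the finite-dimensional substack of connections $G(F)$-equivalent to $A$, and the fibers are finite-dimensional quotients involving the stabilizer $Z_A \subset G(F)$ acting by residual gauges; this should deliver finite-dimensionality of $Y_A$ directly, without any delicate weight-space bookkeeping.
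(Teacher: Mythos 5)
Your main strategy rests on a claim that is false: $Y_A$ is in general \emph{not} contained in a finite union of Schubert cells. Take $G=SL_2$, $r=-2$, $A=t^{-2}E$ with $E$ the standard upper-triangular nilpotent, and $g_m=\on{diag}(t^{-m},t^{m})$ for $m\geq 1$. Then $\Ad_{g_m^{-1}}(A)=t^{2m-2}E$ and $d\log(g_m^{-1})=mH/t$ with $H=\on{diag}(1,-1)$, so $\Ga_{g_m^{-1}}(A)=t^{2m-2}E-mt^{-1}H$ has order $-1\geq r$ and $g_mG(\OO)\in Y_A$; but $g_mG(\OO)$ lies in the Schubert cell $\Gr^{(m,-m)}$ of dimension $2m\to\infty$. (In your notation: after conjugating by the Weyl element the leading coefficient becomes the lower nilpotent $F$, which has weight $-2m\leq 0=i-r$ under the dominant coweight $(m,-m)$, so your weight constraints are satisfied for arbitrarily long $\check\lambda$.) So the "main obstacle" you identify is not merely unaddressed --- it cannot be overcome, because $\langle\theta,\check\lambda\rangle$ is not bounded on $Y_A$. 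The proposition is genuinely a statement about an ind-scheme with (in general) infinitely many irreducible components, each of bounded dimension, and boundedness-style arguments cannot reach it. Your alternative approach has a different problem: the "finite-dimensional substack of connections $G(F)$-equivalent to $A$" inside $t^r\frakg(\OO)/G(\OO)$ is, up to the finite-dimensional stabilizer $Z_A$, precisely $Y_A$ itself, so asserting its finite-dimensionality is circular; and making the dimension count for $t^r\frakg(\OO)/G(\OO)$ rigorous requires an index computation for the operator $\nabla=\partial_t+\on{ad}(B)$, which is exactly the content you would need to supply and do not.

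The paper's proof is pointwise and infinitesimal rather than global: it identifies the tangent space to $Y_A$ at $gG(\OO)$ with $\nabla^{-1}(t^r\frakg(\OO))/\frakg(\OO)$ for $\nabla=\partial_t+\on{ad}(\Ga_{g^{-1}}(A))$ acting on $\frakg(F)$, and proves a lattice lemma --- for any lattice $L$ one has $[\nabla^{-1}(L):L]\leq 0$, established via the Babbitt--Varadarajan canonical form after a finite extension of $F$ --- which bounds every tangent space by $(-r)\dim\frakg$ and hence bounds $\dim Y_A$ without any control on which Schubert cells $Y_A$ meets. If you want to salvage your approach, this index estimate (or Deligne's good lattices, which the authors deliberately avoid because their construction uses cyclic vectors) is the ingredient you are missing.
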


Next, we formulate the second proposition.  Recall that the
affine Weyl group $W_{\aff}$ of $G(F)$ acts on $H_*(\Fl)$ (cf.
\cite{Ka} \S 2.7), where $H_*(\cdot)$ stands for the Borel--Moore
homology.

\begin{prop}\label{action of affine Weyl group} Assume that $A_r$ is
  nilpotent (possibly, equal to zero) and $r\leq -2$. Then the image
  of $H_*(X_A)\to H_*(\Fl)$ is invariant under the action of the
  affine Weyl group $W_{\aff}$.
\end{prop}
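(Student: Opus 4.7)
The plan is to adapt the strategy of Kazhdan--Lusztig \cite{KL} \S 4 for the non-deformed affine Springer fibers. For each simple affine reflection $s_i \in W_{\aff}$, let $P_i \supset I$ denote the corresponding minimal parahoric subgroup, $\Fl_i := G(F)/P_i$ the associated partial affine flag variety, and $\pi_i: \Fl \to \Fl_i$ the natural $\bbP^1$-fibration. The action of $s_i$ on $H_*(\Fl)$ (cf.\ \cite{Ka} \S 2.7) is built from this fibration---concretely, $s_i = \pi_i^* \pi_{i*} - \mathrm{id}$---so, since the $s_i$ generate $W_{\aff}$, it suffices to show that the image of $H_*(X_A) \to H_*(\Fl)$ is stable under each operator $\pi_i^* \pi_{i*}$.

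To this end, I would introduce the parahoric analogue of the deformed Springer fiber,
\[
X_A^{P_i} := \{gP_i \in \Fl_i \; | \; \Ad_{g^{-1}}(\wt{A}) - t^{-r}\, d\log(g^{-1}) \in \Lie P_i\},
\]
which is a well-defined closed ind-subscheme of $\Fl_i$: under $r \leq -2$, the quantity $t^{-r}d\log(u)$ lies in $\Lie P_i$ for $u \in P_i$, so the defining condition is invariant under the change of representative $g \mapsto gu$. The projection $\pi_i$ then restricts to a proper map $X_A \to X_A^{P_i}$. Mimicking the Lie-algebra argument of KL \S 4, one establishes that this restricted projection has the structure of a $\bbP^1$-fibration in the sense required to conclude that $\pi_i^* \pi_{i*}$ preserves $H_*(X_A)$ inside $H_*(\Fl)$. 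The essential input is that $\xi_g := \Ad_{g^{-1}}(\wt{A}) - t^{-r}d\log(g^{-1})$ satisfies $\xi_g \bmod t \in \frakn$ for every $gI \in X_A$, which is a consequence of Lemma \ref{leading term} together with the nilpotency of $A_r$; this plays the role of topological nilpotency in the KL setting, while the bound $r \leq -2$ ensures the $d\log$-corrections remain in $\Lie I$ after twisting by elements of $P_i$.

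The main technical hurdle is carrying out this Lie-algebra computation for the affine simple reflection $s_0$, where $P_0 \not\subset G(\OO)$ and the corresponding root vector $t^{-1}X_\theta$ has nontrivial $t$-dependence. For $u = \exp(y\, t^{-1} X_\theta) \in P_0$, a direct calculation gives $-t^{-r} d\log(u^{-1}) = -y\, t^{-r-2} X_\theta$, which lies in $\Lie I$ precisely because $r \leq -2$; this is the critical cancellation that transports the KL-style argument into the deformed setting. Tracking the interplay between $\Ad_{u^{-1}}$ and the iterated brackets $(\mathrm{ad}\, t^{-1}X_\theta)^k(\xi_g)$---and verifying that together with the $d\log$-contribution they remain in $\Lie I$---is the main calculational step.
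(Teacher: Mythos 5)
Your overall strategy --- reduce to the simple affine reflections, introduce the parahoric deformed Springer fibers $X_A^{P_i}$, and run the Kazhdan--Lusztig \S 4 argument fiberwise --- is viable and is close in spirit to what the paper does. But there is a genuine gap exactly at the step you describe as ``mimicking the Lie-algebra argument of KL \S 4.'' First, the restricted map $X_A\to X_A^{P_i}$ is not a $\bbP^1$-fibration; what the Kazhdan--Lusztig involution construction (\cite{KL1} \S 2) requires is that $\pi_i^{-1}(X_A^{P_i})\to X_A^{P_i}$ is a $\bbP^1$-fibration and that $X_A$ meets each fiber either in a single point or in the whole fiber. To get this you must identify the fiber over $gP_i$ with the Springer fiber in $\bbP^1$ of the image $\bar\xi_g$ of $\xi_g=\Ad_{g^{-1}}(\wt A)-t^{-r}d\log(g^{-1})$ in the reductive quotient $\frakg_i=\Lie(P_i/P_i^u)$, and for that two things are needed that you do not establish: (i) $t^{-r}d\log(u)$ for $u\in P_i$ must lie in the pro-unipotent radical $\Lie P_i^u$, not merely in $\Lie P_i$ or $\Lie I$ --- otherwise $gP_i^u\mapsto \xi_g \bmod \Lie P_i^u$ is not a well-defined $G_i$-equivariant map and the fiber is not a Springer fiber of anything (this is Lemma \ref{A} of the paper; your $i_0$ computation must be sharpened to land in $\Lie P_{i_0}^u$); and (ii) $\bar\xi_g$ must be nilpotent in the $\fraks\frakl_2$-factor of $\frakg_i$ --- if it were regular semisimple the fiber would consist of two points and the involution construction would fail. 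For $i\in I_{\on{f}}$ item (ii) does follow from $\xi_g\bmod t\in\frakn$, but for $i=i_0$ the relevant datum is the coefficient of $tX_{-\theta}$, which is not seen modulo $t$, so Lemma \ref{leading term} alone does not suffice as stated (here it happens to be automatic, being a multiple of a root vector, but this needs to be said).

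For comparison, the paper's proof of Proposition \ref{action of affine Weyl group} avoids this fiber-by-fiber analysis by a different mechanism: it embeds $(\wt A, t^{-r}\partial_t)$ into $\Lie(I^0\rtimes\Aut^+(D))\subset\Lie(G(F)\rtimes\Aut^0(D))$, so that $X_A^{\on{red}}$ and $X_{A,i}^{\on{red}}$ become the fixed-point sets of a single $\bbG_a$-action on $\Fl$ and on $\Fl_i$; equivariance of $\pi_i$ then gives the ``point or $\bbP^1$'' property of the fibers for free (a $\bbG_a$-action on $\bbP^1$ is trivial or has exactly one fixed point), and \cite{KL1} \S 2 yields the involution $\sigma_i$. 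Your route can be completed --- the paper essentially carries it out in its final section via Lemma \ref{A} and Proposition \ref{Groth alteration} --- but as written the central geometric input is asserted rather than proved.
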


For the sake of completeness, let us repeat the argument from
\cite{KL} \S 4 that shows how the above two propositions imply Theorem
\ref{regular point}.

Let $d=\dim X_A$ (it is finite by Proposition \ref{f.d.}). Let $X$
be an irreducible component of $X_A$ of dimension $d$. Denote by
$[X]\in H_{2d}(\Fl)$ the homology class represented by $X$. Then
$[X]\neq 0$ by \emph{loc. cit.} \S 4, Lemma 6. Let $V_A$ be the
image of $H_{2d}(X_A)\to H_{2d}(\Fl)$. Then $V_A$ is generated by
these $[X]$. By Proposition \ref{action of affine Weyl group},
$V_A$ is a subrepresentation of the representation of $W_{\aff}$
on $H_{2d}(\Fl)$. By \emph{loc. cit.} \S 4, Lemma 8, $V_A$ has a
non-zero invariant vector under the action of the finite Weyl
group $W_{\on{f}}\subset W_{\aff}$. For $i\in I_{\on{f}}$, let
$P_i$ be the parahoric subgroup of $G(F)$ with Lie algebra $\Lie
P_i=\Lie I+\frakg_{-\al_i}$. Let $\Fl_i$ be the partial affine
flag variety of parahoric subgroups of $G(F)$ which are conjugate
to $P_i$, and $\pi_i:\Fl\to\Fl_i$ be the projection. Assume that
$Y_A$ does not contain a regular point. Then for any $p=gI\in
X_A$, $(\Ad_{g^{-1}}(\wt{A})-t^{-r}d\log(g^{-1}) \mod t)$ is an
element of $\frakn$ (by Lemma \ref{leading term}) which is not
regular, and therefore is contained in the nil-radical of some
parabolic subalgebra $\frakp_i=\frakb+\frakg_{-\alpha_i}, \ i\in
I_{\on{f}}$. In this case, for any $g'\in gP_i$,
$(\Ad_{g'^{-1}}(\wt{A})-t^{-r}d\log(g'^{-1}) \mod t)$ is also
contained in $\frakn$ (in fact, in the nil-radical of $\frakp_i$)
and therefore $\pi_i^{-1}(\pi_i(p))\subset X_A$. For each
$d$-dimensional irreducible component $X\subset X_A$, let $X_i,
i\in I_{\on{f}}$ be the closed subset of points $p$ on $X$ such
that $\pi_i^{-1}(\pi_i(p))\subset X_A$. Then $X=\cup_{i\in
I_{\on{f}}}X_i$. Since $X$ is irreducible, $X=X_i$ for some $i$,
i.e., there exists some $i\in I_{\on{f}}$ such that
$X=\pi_i^{-1}(\pi_i(X))$. Let $T_{s_i}$ be the corresponding
simple reflection in $W_{\on{f}}$, which acts on $H_{2d}(\Fl)$.
Then $(\mathrm{Id}+T_{s_i})[X]=0$. Now let $T=\sum_{w\in
W_{\on{f}}}T_w$. Since for any $i\in I_{\on{f}}$,
$T=Q_i(\mathrm{Id}+T_{s_i})$, we find that $T[X]=0$ for any
$d$-dimensional irreducible component $X\subset X_A$. Therefore,
$TV_A=0$, which contradicts the fact that $V_A$ has a non-zero
invariant vector under the action of $W_{\on{f}}$.

\medskip

In the remaining part of this note we prove Propositions \ref{f.d.}
and \ref{action of affine Weyl group} about the deformed affine
Springer fibers. We also discuss the action of the affine Weyl group
on $H_*(X_A)$.

\section{Proof of Proposition \ref{f.d.}}

We begin with the proof of Proposition \ref{f.d.}. Recall the
definition of $M_A$ from the beginning of last section. For any
$g\in M_A$, we consider the following $\bbC$-vector space
\[T_{g}=\{X\in\frakg(F)|\partial_tX+[B,X]\in t^r\frakg(\OO)\}/\frakg(\OO)\]
where $B=\Ga_{g^{-1}}(A)$. Observe that if $g'=gg_1$ with $g_1\in
G(\OO)$, then $B'=\Ga_{g'^{-1}}(A)=\Ga_{g_1^{-1}}(B)$, and there
is a canonical isomorphism $\gamma_{g_1}:T_g\cong T_{g'}$ given by
$X\mapsto \Ad_{g_1^{-1}}(X)$. Therefore, $T_g$ is canonically attached
to every $gG(\OO)\in Y_A$. From the definition of $Y_A$, it is
clear that $T_g$ is canonically isomorphic to the tangent space of
$Y_A$ at $gG(\OO)$. We claim that the dimension of this
$\bbC$-vector space is $\leq (-r)\dim\frakg$. This proves that the
dimension of $Y_A$ is $\leq (-r)\dim\frakg$.

We regard $\frakg(F)$ as a vector space over $F$, with a connection
$\nabla_t=\partial_t+\text{ad}(B)$. Then
$T_g=\nabla^{-1}(t^r\frakg(\OO))/\frakg(\OO)$. Now the claim is a
direct consequence of the following lemma, whose proof was
suggested to us by D. Arinkin.

Let $(V,\nabla)$ be a finite-dimensional vector space over $F$
with a connection. By an $\OO$-\emph{lattice} in $V$ we understand
a finite generated $\calO$-submodule $L$ of $V$ such that the natural map
$L\otimes_\calO F\to V$ is an isomorphism. By a \emph{lattice} in
$V$ we understand a $\bbC$-subspace in $V$ that is commensurable
with an $\OO$-lattice.

\begin{lem} For any lattice
$L\subset V$, $\nabla^{-1}(L)$ is also a lattice of $V$, and the
relative dimension of $\nabla^{-1}(L)$ to $L$ is
\[[\nabla^{-1}(L):L]:=\dim\frac{\nabla^{-1}(L)}{\nabla^{-1}(L)\cap L}
-\dim\frac{L}{\nabla^{-1}(L)\cap L}\leq 0.\]
\end{lem}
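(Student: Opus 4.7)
The plan is to first reduce to the case that $L$ is an $\OO$-lattice by a monotonicity argument, and then to analyze the operator $\nabla$ directly in a basis of $V$. Both claims of the lemma---that $\nabla^{-1}(L)$ is a lattice, and that $[\nabla^{-1}(L):L]\leq 0$---are established in this way.

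For the reduction, I would show that the relative dimension is monotone in $L$. Given two commensurable lattices $L_1\subseteq L_2$, the $\bbC$-linear map $\nabla$ induces an injection $\nabla^{-1}(L_2)/\nabla^{-1}(L_1)\hookrightarrow L_2/L_1$, so both sides are finite-dimensional and
\[
\dim\nabla^{-1}(L_2)/\nabla^{-1}(L_1)\leq\dim L_2/L_1.
\]
Rearranging via the additivity of the relative dimension, this yields $[\nabla^{-1}(L_2):L_2]\leq[\nabla^{-1}(L_1):L_1]$. Since every lattice $L$ is contained in some $\OO$-lattice $L_0$ (take the $\OO$-submodule generated by $L$ together with a reference $\OO$-lattice commensurable with it), proving the lemma for $\OO$-lattices implies it in general.

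For an $\OO$-lattice $L_0$, fix a basis of $V$ so that $L_0=\OO^n$ and write $\nabla=\partial_t+M$ with $M\in t^{-k}\mathrm{Mat}_n(\OO)$ for some $k\geq 1$. A direct computation gives $\nabla(t^N L_0)\subseteq L_0$ for all $N\geq k$, so $t^N L_0\subseteq\nabla^{-1}(L_0)$; in particular $\nabla^{-1}(L_0)/(\nabla^{-1}(L_0)\cap L_0)$ is finite-dimensional. Conversely, if $v\in V$ has leading term $v_p t^p$ with $p\ll 0$, then the leading term of $\nabla v=\partial_t v+Mv$ has order at most $\min(p-1,p-k)$, so $\nabla v\notin L_0$ unless the leading terms cancel; since cancellations are constrained to a finite-dimensional space of resonant leading coefficients at each order, $\nabla^{-1}(L_0)$ is contained in some $t^{-N'}L_0$. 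This yields the first claim.

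The heart of the proof is the inequality $[\nabla^{-1}(L_0):L_0]\leq 0$. The strategy I would pursue is to match each ``extra'' element of $\nabla^{-1}(L_0)\setminus L_0$, arising from a resonant cancellation above, with an element of $L_0$ whose image under $\nabla$ escapes $L_0$. I expect the matching to become transparent after putting $M$ into a normal form via a gauge transformation (the Turrittin--Levelt decomposition, or more elementarily an induction on the rank of $V$ using a Jordan--H\"older filtration of $(V,\nabla)$ as a module over the non-commutative PID $F\langle\partial_t\rangle$), with the rank-one case handled in closed form. The main obstacle is precisely this combinatorial matching: the two sides of the inequality are controlled by different facets of $M$, and one needs a structural reason---traceable to the slopes of the Newton polygon of $\nabla$---why the escaping side is at least as large as the extra-preimage side.
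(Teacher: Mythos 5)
Your reduction step points the wrong way. Your monotonicity statement is correct: for $L_1\subseteq L_2$ the injection $\nabla^{-1}(L_2)/\nabla^{-1}(L_1)\hookrightarrow L_2/L_1$ gives $[\nabla^{-1}(L_2):L_2]\leq[\nabla^{-1}(L_1):L_1]$, i.e.\ the index \emph{decreases} as the lattice grows. So embedding $L$ into a larger $\OO$-lattice $L_0$ only yields $[\nabla^{-1}(L_0):L_0]\leq[\nabla^{-1}(L):L]$, which bounds the wrong quantity. To conclude $[\nabla^{-1}(L):L]\leq 0$ you must instead exhibit a \emph{small} lattice $L'\subseteq L$ for which the inequality is known; this is what the paper does, proving the bound for all lattices $t^kM$ in a fixed family and then choosing $k$ with $t^kM\subseteq L$. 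The fix is easy (replace $L_0$ by $t^NL_0\subseteq L$), but as written the reduction proves nothing.

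The more serious problem is that the heart of the lemma --- the inequality for a fixed $\OO$-lattice --- is exactly the step you leave open; you name ``the main obstacle'' without overcoming it, and your argument that $\nabla^{-1}(L_0)$ is a lattice at all (``cancellations are constrained to a finite-dimensional space'') rests on the same unproved point. The structural reason you are looking for is the solution space, not a slope-by-slope matching. After passing to $E=F(t^{1/d})$ and a basis in which $\nabla$ is in the Babbitt--Varadarajan canonical form $\partial_t+\sum_i H_it^{r_i}+Xt^{-1}$ ($H_i$ diagonal, $X$ upper triangular, $[H_i,X]=0$), the standard lattice $\Lambda=\calO_E\mathbf{e}$ satisfies $\nabla^{-1}(t^k\Lambda)\subset t^{k+1}\Lambda+\on{Sol}$ for every $k$, where $\on{Sol}=\ker\nabla$ has $\dim_{\bbC}\on{Sol}\leq n$: any preimage of $t^k\Lambda$ agrees with a flat section modulo $t^{k+1}\Lambda$. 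Intersecting with $V$ and setting $M=\Lambda\cap V$, one gets $[\nabla^{-1}(t^kM):t^kM]\leq[t^{k+1}M:t^kM]+\dim\on{Sol}\leq -n+n=0$; the $n$ dimensions lost in shrinking $t^kM$ to $t^{k+1}M$ are exactly what can be recovered from flat sections. This single containment simultaneously proves lattice-ness and the index bound, and avoids the Newton-polygon bookkeeping your sketch would require.
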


\begin{rmk} This lemma is an easy consequence of Deligne's theory of
``good lattices'' for connections (cf. \cite{D} pp.110-112), as we
learned from D. Arinkin. However, to prove the existence of ``good
lattices'', Deligne used the existence of the cyclic vector for
$(V,\nabla)$ (cf. Remark \ref{cyclic}).  Therefore we prefer to avoid
using these results in the proof of our theorem.
\end{rmk}

\begin{proof} We first recall that the connection $(V,\nabla)$ is
said to be in the \emph{canonical form} (with respect to some
$F$-basis $\mathbf{e}$ of $V$), if it looks as follows:
\[\partial_t+H_1t^{r_1}+H_2t^{r_2}+\cdots+H_mt^{r_m}+Xt^{-1},\]
where $r_1<r_2<\cdots<r_m<-1$, $H_i$ are diagonal matrices, $X$ is
an upper triangular matrix, and $[H_i,X]=0$. It is proved in
\cite{BV}, \S 6 that, possibly after a finite field extension
$E/F$, for every connection $(V,\nabla)$, there exists some
($E$-)basis $\mathbf{e}$ of $V\otimes_FE$, such that this
connection is in a canonical form with respect to this basis.

Now we begin to prove the lemma. Assume that $\dim V=n$. Let
$E=F(t^{1/d})$ be a finite extension of $F$ and an $E$-basis
$\mathbf{e}$ of $V\otimes_FE$ such that the connection $\nabla$
with respect to this basis is in the canonical form. Let
$\Lambda=\calO_E\mathbf{e}$, where $\calO_E$ is the integral
closure of $\calO$ in $E$. $\Lambda$ is a lattice of
$V\otimes_FE$. Since the connection is in the canonical form with
respect to $\mathbf{e}$, we have:
\begin{equation}\label{canonical} \nabla^{-1}(t^k\Lambda)\subset
  t^{k+1}\Lambda+\text{Sol},\quad\text{for any $k$},
\end{equation}
where $\text{Sol}\subset V\otimes_FE$ is the kernel of $\nabla$ (that
is, the solution space of $\nabla$). Note that
$\dim_{\C}(\text{Sol})\leq n$.

Set $M=\Lambda\cap V$ inside $V\otimes_FE$. Then $M$ is a lattice
in $V$. By \eqref{canonical},
$$[\nabla^{-1} (t^kM): t^kM]\le 0,$$
because the codimension of $t^{k+1}M$ in
$V\cap(t^{k+1}\Lambda+\text{Sol})$ is at most $n$.

Finally, we can prove the statement. Indeed, take any lattice $L$ and
choose $k$ such that $L\supset t^kM$. Since
$$\dim(L/t^kM)\ge\dim(\nabla^{-1}L/\nabla^{-1}(t^kM)),$$ the statement
follows.

\end{proof}

\begin{rmk}
Observe that the tangent spaces for the non-deformed affine
Springer fiber are never finite-dimensional (even for regular
semisimple elements in $\frakg(F)$). This is because the
non-deformed affine Springer fiber of a regular semisimple element
is highly non-reduced and has infinitely many ``nilpotent
directions''.
\end{rmk}

\section{Proof of Proposition \ref{action of affine Weyl group}}

Let $I_{\aff}=I_{\on{f}}\bigsqcup\{i_0\}$ be the set of vertices in
the affine Dynkin diagram for $G(F)$, with $i_0$ corresponding to the
affine vertex. Denote by $T_{s_i}$ to the simple reflection
corresponding to the vertex $i\in I_{\aff}$. It is enough to
construct, for each $i$, an involution $\sigma_i:H_*(X_A)\to H_*(X_A)$
such that the natural map $j:H_*(X_A)\to H_*(\Fl)$ satisfies
$j(\sigma_i(x))=T_{s_i}j(x)$.

Let $\Aut^0(D)$ be the group of automorphisms of $D=\spec\OO$.  It is
an extension of $\bbG_m$ by a pro-unipotent group $\Aut^+(D)$ (see,
e.g., \cite{FB} \S 6.2). The Lie algebra $\Der^0(D)$ of $\Aut^0(D)$ is
topologically spanned by $\{t^n\partial_t;n\geq 1\}$ and the Lie
algebra $\Der^+(D)$ of $\Aut^+(D)$ is topologically spanned by
$\{t^n\partial_t;n\geq 2\}$. $\Aut^0(D)$ acts on $G(F)$, and we can
form the semi-direct product $G(F)\rtimes\Aut^0(D)$. We have
\[\Lie (G(F)\rtimes\Aut^0(D))=\frakg(F)\oplus\Der^0(D) \ \ \
\mbox{ as vector spaces}.\]
Obviously, the action of $\Aut^0(D)$ on $G(F)$ leaves $G(\OO)$
invariant. Therefore, it acts on $\Gr$. We thus obtain an action
of $G(F)\rtimes\Aut^0(D)$ on $\Gr$. In a similar fashion,
$G(F)\rtimes\Aut^0(D)$ acts on all the affine (partial) flag
varieties of $G(F)$, as is seen from the following lemma.

A standard parahoric subgroup of $G(F)$ is a parahoric subgroup of
$G(F)$ that contains $I$. For $i\in I_{\aff}$, let $P_i$ be the
standard minimal parahoric subgroup corresponding to $i$.

\begin{lem}\label{invariant}The action of $\Aut^0(D)$ on $G(F)$ leaves
$I,P_i, i\in I_{\aff}$ invariant and therefore leaves all standard
parahoric subgroups of $G(F)$ invariant.
\end{lem}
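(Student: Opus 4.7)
The plan is to verify invariance separately for each of the generators $I$ and $P_i$ (for $i \in I_{\aff}$); invariance of an arbitrary standard parahoric then follows, since any such subgroup is generated inside $G(F)$ by $I$ together with a subset of the $P_i$'s. The cases of $I$ and of $P_i$ with $i \in I_{\on{f}}$ will be essentially formal; the real content sits in the affine case $P_{i_0}$, which is not contained in $G(\OO)$ and hence is invisible to the reduction modulo $(t)$.

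For the easy cases, I would observe that any $\rho \in \Aut^0(D)$ is a continuous $\bbC$-algebra automorphism of $\OO$ preserving the maximal ideal $\mm = (t)$, and extends canonically to $F$ with $\rho(\OO) = \OO$. Hence $G(\OO) \subset G(F)$ is $\Aut^0(D)$-invariant, and the reduction map $G(\OO) \to G$ intertwines the $\Aut^0(D)$-action with the trivial action on $G$. The Iwahori $I$, being the preimage of $B \subset G$, is therefore invariant; likewise each $P_i$ with $i \in I_{\on{f}}$, being the preimage of the minimal standard parabolic $B_i \supset B$ of $G$ associated to $i$.

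For $P_{i_0}$ I would pass to the Lie algebra. Since $\Aut^0(D)$ is an extension of $\bbG_m$ by the pro-unipotent $\Aut^+(D)$, hence connected, and since $P_{i_0}$ is a connected (pro-algebraic) ind-subgroup of $G(F)$, it suffices to show that $\Der^0(D)$ preserves $\Lie P_{i_0} = \Lie I \oplus \bbC \cdot t^{-1} X_\theta$, where $X_\theta \in \frakg_\theta$ is a non-zero vector for the highest root $\theta$ of $\frakg$. The infinitesimal action of $\xi(t)\partial_t \in \Der^0(D)$ on $\frakg(F)$ is $A \mapsto \xi(t)\partial_t A$ (up to an overall sign), so writing $\xi(t) = a_1 t + a_2 t^2 + \cdots \in t\bbC[[t]]$, one computes
\[
\xi(t)\partial_t\bigl(t^{-1}X_\theta\bigr) = -t^{-2}\xi(t)X_\theta = -a_1 t^{-1}X_\theta - (a_2 + a_3 t + \cdots)X_\theta.
\]
The first summand lies in $\bbC \cdot t^{-1}X_\theta$; the remainder lies in $\bbC[[t]] \cdot X_\theta \subset \frakb + t\frakg[[t]] = \Lie I$, using that $X_\theta \in \frakn \subset \frakb$. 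Invariance of $\Lie I$ itself is automatic: for $A \in \Lie I \subset \frakg[[t]]$ one has $\xi(t)\partial_t A \in t\bbC[[t]] \cdot \frakg[[t]] \subset \Lie I$, the factor $t$ in $\xi(t)$ absorbing any loss of order from $\partial_t$. The main obstacle is thus localized to the case of $P_{i_0}$, and what makes the computation work is precisely the vanishing $\xi(t) \in t\bbC[[t]]$ built into $\Der^0(D)$, which cancels the $t^{-2}$ produced by differentiating $t^{-1}X_\theta$.
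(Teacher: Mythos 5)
Your proof is correct and follows essentially the same route as the paper: invariance of $I$ and of $P_i$ for $i\in I_{\on{f}}$ via the evaluation map $G(\OO)\to G$ (which the action fixes), invariance of $P_{i_0}$ via the Lie algebra computation $\xi(t)\partial_t(t^{-1}X_\theta)\in t^{-1}\bbC[[t]]\frakg_\theta\subset \Lie P_{i_0}$ using $\xi(t)\in t\bbC[[t]]$, and generation of arbitrary standard parahorics by the $P_i$'s. Your write-up is somewhat more explicit than the paper's (e.g.\ in spelling out the passage from $\Der^0(D)$-invariance of $\Lie P_{i_0}$ to $\Aut^0(D)$-invariance of $P_{i_0}$ via connectedness), but the content is the same.
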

\begin{proof}Write $G(\OO)=G^{(1)}(\OO)G$. The action of
$\Aut^0(D)$ on $G(\OO)$ leaves $G^{(1)}(\OO)$ invariant and
fixes $G$. Since, $I$ and $P_i, i\in I_{\on{f}}$ are pre-images of
subgroups of $G$ under the evaluation map $G(\OO)\to G$, they
are invariant under the action of $\Aut^0(D)$. It remains to show
that $P_{i_0}$ is also invariant under the action of $\Aut^0(D)$,
where $i_0$ is the affine vertex in the affine Dynkin diagram of
$\frakg$.

We have $\Lie P_{i_0}=\mathrm{Lie}I+t^{-1}\frakg_\theta$, where
$\frakg_\theta$ the the root space corresponding to the highest
root $\theta$. It is clear that
$[\Der^0(D),t^{-1}\frakg_\theta]\subset t^{-1}\C[[t]]\frakg_\theta
\subset \Lie P_{i_0}$. Therefore, the action of $\Aut^0(D)$ also
leaves $P_{i_0}$ invariant. Since the standard parahoric subgroups
are generated by some of the $P_i$'s, the lemma follows.
\end{proof}

Thus, elements in the Lie algebra $\Lie(G(F)\rtimes\Aut^0(D))$ act on
these affine (partial) flag varieties by vector fields. The zero sets
of these vector fields are nothing but our deformed affine Springer
fibers! The reason is the following. The group $G(F)$ acts on
$\Lie(G(F)\rtimes\Aut^0(D))$ via the adjoint representation.  Let us
denote this adjoint representation by $\widetilde{\Ad}$ to distinguish
it from the adjoint representation of $G(F)$ on $\frakg(F)$. Let
$$
(\wt{A},t^{-r}\partial_t) \in\frakg(F) \oplus \Der^0(D), \qquad r\leq
-1.
$$

We have
\begin{lem} For $g\in G(F)$,
$\widetilde{\Ad}_g((\wt{A},t^{-r}\partial_t))=(\Ad_g(\wt{A})-t^{-r}
(\partial_tg)g^{-1},t^{-r}\partial_t)$.
\end{lem}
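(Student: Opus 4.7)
The plan is to compute $\widetilde{\Ad}_g$ separately on the two pieces $(\wt{A},0)$ and $(0,t^{-r}\partial_t)$ of $(\wt{A},t^{-r}\partial_t)$, using the linearity of the adjoint representation together with the explicit group law on the semi-direct product $G(F)\rtimes\Aut^0(D)$, and then add the two contributions.

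For the first piece, I would observe that $\frakg(F)$ is the Lie algebra of the normal factor $G(F)$, hence is an ideal of $\Lie(G(F)\rtimes\Aut^0(D))$, and the restriction of $\widetilde{\Ad}$ to this ideal coincides with the ordinary adjoint action of $G(F)$ on $\frakg(F)$. Therefore
\[
\widetilde{\Ad}_g(\wt{A},0)=(\Ad_g\wt{A},\,0).
\]

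For the second piece, I would use the semi-direct product multiplication $(g_1,\phi_1)(g_2,\phi_2)=(g_1\cdot\phi_1(g_2),\,\phi_1\phi_2)$. Exponentiating $v:=t^{-r}\partial_t\in\Der^0(D)$ and conjugating by $g\in G(F)$ gives
\[
(g,1)\bigl(1,\exp(sv)\bigr)(g^{-1},1)=\bigl(g\cdot\exp(sv)(g^{-1}),\,\exp(sv)\bigr),
\]
where $\exp(sv)(g^{-1})$ is the action of the automorphism on $g^{-1}\in G(F)$. Differentiating at $s=0$, the $\Aut^0(D)$-component returns $v$ itself, while the $\frakg(F)$-component is $g\cdot v(g^{-1})$, meaning the tangent vector obtained by applying the derivation $v$ to the $G$-valued function $g^{-1}$ and trivializing by right translation. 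Applying the Leibniz rule to the identity $g\cdot g^{-1}=1$ yields $g\cdot v(g^{-1})=-v(g)\cdot g^{-1}=-t^{-r}(\partial_t g)g^{-1}$, which is exactly the extra term in the stated formula.

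Summing the two contributions gives the lemma. The only delicate point, and the place where I expect to need to be careful, is fixing sign conventions consistently: whether $\Aut^0(D)$ acts on $G(F)=\Hom(D^\times,G)$ by precomposition with $\phi$ or $\phi^{-1}$, and whether one trivializes $TG$ by left or by right translation. Once these choices are pinned down, the calculation reduces to one application of Leibniz and the desired logarithmic-derivative correction $-t^{-r}(\partial_t g)g^{-1}$ drops out automatically; this is precisely the term that makes the vector field on $\Gr$ (respectively $\Fl$) generated by $(\wt{A},t^{-r}\partial_t)$ vanish exactly on the deformed affine Springer fibers defined in the previous section.
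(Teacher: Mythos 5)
Your argument is correct, but it takes a genuinely different route from the paper's. You split $(\wt{A},t^{-r}\partial_t)$ into its two components and compute $\widetilde{\Ad}_g$ of each at the group level: the $\frakg(F)$-part via the observation that $\frakg(F)$ is the Lie algebra of the normal factor, so $\widetilde{\Ad}_g$ restricts to the ordinary $\Ad_g$ there; and the $\Der^0(D)$-part by conjugating the one-parameter subgroup $\exp(sv)$, $v=t^{-r}\partial_t$, inside the semidirect product, differentiating at $s=0$, and applying the Leibniz rule to $g\cdot g^{-1}=1$ to extract the correction term $-t^{-r}(\partial_t g)g^{-1}$. The paper instead stays entirely at the Lie-algebra level: it brackets $\widetilde{\Ad}_g((\wt{A},t^{-r}\partial_t))$ against an arbitrary test element $B\in\frakg(F)$, expands using $\widetilde{\Ad}_g[X,Y]=[\widetilde{\Ad}_gX,\widetilde{\Ad}_gY]$ together with the Leibniz rule for the derivation acting on $\Ad_{g^{-1}}(B)$, and then invokes the semisimplicity of $\frakg$ so that an element of $\frakg(F)\oplus\Der^0(D)$ is determined by its adjoint action on the ideal $\frakg(F)$. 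Your route identifies the conjugate directly and so does not need that faithfulness step, at the price of needing $\exp(sv)$ to exist in $\Aut^0(D)$ --- which it does here since $-r\geq 1$ --- and of the sign-convention bookkeeping you rightly flag (precomposition by $\phi$ versus $\phi^{-1}$, left versus right trivialization of $TG$); the paper's route avoids exponentiating altogether but leans on semisimplicity to pin down the answer. Both arguments are complete and give the same formula.
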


\begin{proof} Let $B\in\frakg(F)$. We have
\begin{align*}
[\widetilde{\Ad}_g((\wt{A},t^{-r}\partial_t)),B] &=
\widetilde{\Ad}_g[(\wt{A},t^{-r}\partial_t),\Ad_{g^{-1}}(B)]\\
&=[\Ad_g(\wt{A}),B] + \Ad_g[t^{-r}\partial_t,\Ad_{g^{-1}}(B)]\\
&=[\Ad_g(\wt{A}),B]+\Ad_g([t^{-r}\partial_t(g^{-1})g,\Ad_{g^{-1}}(B)]
+\Ad_{g^{-1}}(t^{-r}\partial_tB))\\
&=[(\Ad_g(\wt{A})-t^{-r}(\partial_tg)g^{-1},t^{-r}\partial_t),B].
\end{align*}
Since $\frakg$ is semisimple, this identity implies the desired
formula.
\end{proof}

Therefore, if $\wt{A}, r$ are as in the assumption of Theorem
\ref{regular point}, we obtain that the reduced algebraic variety
$X^{\on{red}}_A\subset\Fl$ underlying $X_A$ is the zero set of the
vector field on $\Fl$ obtained by the action of
$(\wt{A},t^{-r}\partial_t)\in\Lie(G(F)\rtimes\Aut^0(D))$.
Likewise, $Y^{\on{red}}_A\subset\Gr$ is the zero set of the
corresponding vector field on $\Gr$. Let $\Fl_i=G(F)/P_i$, and
$\pi_i:\Fl\to\Fl_i$ be the projection. This is a
$\bbP^1$-fibration. We will also define $X^{\on{red}}_{A,i}$ to be
the zero set of the corresponding vector field on $\Fl_i$. It is
clear that the projection $\pi_i:\Fl\to\Fl_i$ restricts to
$\pi_i:X^{\on{red}}_A\to X^{\on{red}}_{A,i}$.

Now, under the assumptions of Theorem \ref{regular point}, $r\leq -2$,
and $\wt{A}\in\Lie I^0$, where $I^0=[I,I]$ is the pro-unipotent
radical of $I$. Therefore,
\[(\wt{A},t^{-r}\partial_t)\in\Lie I^0 \oplus
\Der^+(D)=\Lie(I^0\rtimes \Aut^+(D)).\] Since $I^0\rtimes\Aut^+(D)$ is
pro-unipotent, the vector field on $\Fl$ (resp., on $\Fl_i$) gives
rise to an action of $\bbG_a$ on $\Fl$ (resp., on
$\Fl_i$). Furthermore, the projection $\pi_i:\Fl\to\Fl_i$ is
$\bbG_a$-equivariant. Now $X^{\on{red}}_{A,i}$ is just the fixed point
set of this $\bbG_a$ action on $\Fl_i$. Therefore, there is a
fiberwise $\bbG_a$-action on $\pi_i^{-1}(X^{\on{red}}_{A,i})$, which
is a $\bbP^1$-fibration over $X^{\on{red}}_{A,i}$, and
$X^{\on{red}}_A$ is just the fixed point set. Now the construction of
\cite{KL1} \S 2 gives us the desired involution $\sigma_i:H_*(X_A)\to
H_*(X_A)$.

This completes the proof of Proposition \ref{action of affine Weyl
  group} and hence of Theorem \ref{regular point}. Therefore Theorem
  \ref{main} is now proved.

\section{The action of the affine Weyl group on $H_*(X_A)$}

We continue to assume that $G$ is a connected simply-connected
semisimple complex algebraic group. Let $A$ be a regular semisimple
nil-element in $\frakg(F)$, i.e., $(\on{ad}A)^r\to 0$ if $r\to\infty$,
as defined in \cite{KL} \S 2. According to \emph{loc. cit.}, this is
equivalent to the property that $A$ is conjugate to an element of
$\g(\OO)$ whose reduction modulo $t$ is a nilpotent element of
$\g$. Let $\on{Sp}_A$ be the \emph{non-deformed} affine Springer fiber
of $A$ in $\Fl$. In \cite{Lu} \S 5, Lusztig constructed an action of
$W_{\aff}$ on $H_*(\on{Sp}_A)$. We show in this section that a similar
construction can be applied to obtain an action of $W_{\aff}$ on the
homology of the \emph{deformed} affine Springer fibers.

Let $(\wt{A},t^{-r}\partial_t) \in \Lie I^0 \oplus \Der^+(D)$. We
will prove that the homology $H_*(X_A)$ itself admits an action of
the affine Weyl group, where the simple reflection corresponding
to $i$ will act on $H_*(X_A)$ by $\sigma_i$ constructed above. The
only new result here is Proposition \ref{Groth alteration}, the
counterpart of which for the usual affine Springer fiber is proved
in \cite{Lu} \S 5.4.

For every $J\subsetneqq I_{\aff}$, let $P_J$ be the standard
parahoric subgroup of $G(F)$, generated by $P_i$, $i\in J$. This
is a pro-algebraic group. Let $P_J^u$ be its pro-unipotent
radical, so that $G_J:=P_J/P_J^u$ is a reductive group. Let
$\frakg_J=\Lie G_J$. For example, if $J=I_{\on{f}}$ is the set of
vertices in the finite Dynkin diagram, then
$P_{I_{\on{f}}}=G(\OO)$, $P_{I_{\on{f}}}^u=G^{(1)}(\OO)$ and
$G_{I_{\on{f}}}=G$. The construction is based on the following

\begin{lem}\label{A} Let $J\subsetneqq I_{\aff}$. Then for any $g\in
  P_J$ and $r\leq -2$ we have $t^{-r}(\partial_tg)g^{-1}\in\Lie
  P_J^u$.
\end{lem}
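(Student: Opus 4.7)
The plan is to exploit the $\Aut^0(D)$-invariance of $P_J$ established in Lemma \ref{invariant} to reduce the assertion to a purely Lie-algebraic inclusion, then verify that inclusion by an analysis of affine roots.

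First, I would show that $t\,(\partial_t g)\,g^{-1} \in \Lie P_J$ for every $g \in P_J$. By Lemma \ref{invariant}, the $\bbG_m$-subgroup of $\Aut^0(D)$ acting by loop rotations $t \mapsto \lambda t$ preserves $P_J$, so its associated vector field on $G(F)$ is tangent to $P_J$. Its infinitesimal generator is $t\partial_t \in \Der^0(D)$, and the resulting tangent vector at $g \in G(F)$, trivialized to $\Lie G(F)$ by right translation, is precisely $(t\partial_t g)\,g^{-1} = t\cdot d\log(g)$. Tangency to $P_J$ therefore gives $t\cdot d\log(g) \in \Lie P_J$.

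Next, I would establish the purely algebraic inclusion $t\cdot\Lie P_J \subset \Lie P_J^u$ for $J \subsetneqq I_{\aff}$. Using the Levi decomposition $\Lie P_J = \frakg_J \oplus \Lie P_J^u$, the pro-unipotent part $\Lie P_J^u$ is a $\C[[t]]$-submodule of $\frakg(F)$, whence $t\Lie P_J^u \subset \Lie P_J^u$. For the Levi $\frakg_J = \frakt \oplus \bigoplus_{\beta \in \Phi_J}\frakg_\beta$, multiplication by $t$ sends each affine root space $\frakg_\beta$ to $\frakg_{\beta+\delta}$; the toral piece $\frakt$ maps to $t\frakt \subset \Lie P_J^u$. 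Since $J \subsetneqq I_{\aff}$, the imaginary root $\delta$ has positive coefficient $a_{i^*}$ on some $\alpha_{i^*}$ with $i^* \notin J$, so $\delta$ is not in the root lattice of $\Phi_J$, and hence $\beta+\delta \notin \Phi_J^+$. A short case analysis---using that for $J \subsetneqq I_{\aff}$ the coefficients of roots of $\Phi_J^+$ are bounded by the affine marks (so levels of $\Phi_J$ lie in $\{-1,0,1\}$, and level-$(-1)$ roots have positive finite part)---shows that $\beta+\delta$ is a positive affine root, and hence $\frakg_{\beta+\delta} \subset \Lie P_J^u$.

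Combining these, for $r \le -2$ I would rewrite
\[
t^{-r}(\partial_t g)\,g^{-1} \ =\ t^{-r-2}\cdot\bigl(t\cdot(t\,d\log g)\bigr);
\]
the first step gives $t\,d\log(g) \in \Lie P_J$, the second gives $t\cdot(t\,d\log g) \in \Lie P_J^u$, and since $-r-2 \ge 0$ and $\Lie P_J^u$ is $t$-stable, the entire expression lies in $\Lie P_J^u$. The main obstacle I anticipate is the root-system bookkeeping in the middle step, in particular confirming that $\beta+\delta$ is a positive affine root for every $\beta \in \Phi_J$, including the level-$(-1)$ roots whose finite part must be shown positive. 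This reduces to the standard bound on coefficients of positive roots of $\Phi_J$ by the affine marks, which in turn follows from the observation that no finite root has mixed-sign coefficients in the basis of simple roots, forcing the auxiliary finite root appearing in any level-$1$ element of $\Phi_J^+$ to be negative.
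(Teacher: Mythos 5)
Your proof is correct, but it reorganizes the argument in a way that differs genuinely from the paper's. The paper reduces the statement to the single Lie-algebra inclusion $[t^{-r}\partial_t,\Lie P_J]\subset\Lie P_J^u$ inside $\Lie(G(F)\rtimes\Aut^0(D))$ (implicitly integrating this back to arbitrary $g\in P_J$), and then verifies that inclusion by a case analysis on whether $i_0\in J$, using $t^2\frakg(\OO)\subset\Lie P_J^u\subset\Lie P_J\subset t^{-1}\frakg(\OO)$ and the explicit description of the level-$(-1)$ part as $\sum_\beta t^{-1}\frakg_{\theta-\beta}$. You instead split off a group-level step — loop rotation preserves $P_J$ by Lemma \ref{invariant}, so the velocity $t\,d\log(g)$ of the $\bbG_m$-orbit through $g$ lies in $\Lie P_J$ — and then prove the purely algebraic inclusion $t\cdot\Lie P_J\subset\Lie P_J^u$ by tracking affine-root levels, finishing with $t^{-r}d\log(g)=t^{-r-1}\bigl(t\,d\log(g)\bigr)$ and the $t$-stability that your inclusion provides. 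What your route buys is a cleaner justification of the passage from Lie algebra to group (the paper's ``it is enough to show'' requires a small integration argument that is left to the reader), at the cost of slightly more root-system bookkeeping: the bracket $[t^{-r}\partial_t,\cdot]$ automatically annihilates the constant term of $\frakg(\OO)$-valued elements, whereas multiplication by $t$ does not, so you must separately check that $t\frakt$ and the shifts $\frakg_{\beta+\delta}$ of the Levi root spaces land in $\Lie P_J^u$ — which you do correctly, the essential points being that $\delta$ is not in the span of the simple roots indexed by a proper subset $J$, and that the level-$(-1)$ roots of the Levi have positive finite part $\theta-\gamma$. The computational cores of the two arguments are ultimately the same observation about where the level-$(-1)$ root spaces go, packaged differently.
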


\begin{proof} It is enough to show that in
$\Lie(G(F)\rtimes\Aut^0(D))$, $[t^{-r}\partial_t,\Lie P_J]\subset
\Lie P_J^u$. It is easy to see that
\[t^2\frakg(\OO)\subset\Lie P_J^u\subset\Lie P_J\subset t^{-1}
\frakg(\OO)\]
for any $J\subsetneqq I_{\aff}$. First we assume that $i_0\not\in
J$. In this case, $\Lie P_J\subset\frakg(\OO)$ and therefore
$[t^{-r}\partial_t,\Lie P_J]\subset t^2\frakg(\OO)\subset\Lie
P_J^u$. The lemma holds. Next, we assume that $J=\{i_0\}\cup J'$,
with $J'\subsetneqq I_{\on{f}}$. Then
\[\Lie P_J=\Lie P_J\cap\frakg(\OO)+\sum_{\beta\in\Delta_{J'}^+\cup\{0\}}
t^{-1}\frakg_{\theta-\beta},\]
where $\Delta_{J'}^+$ is the set of positive roots for $G_{J'}$.
Clearly, $[t^{-r}\partial_t,\Lie P_J\cap\frakg(\OO)]\in\Lie P_J^u$. In
addition, $[t^{-r}\partial_t,t^{-1}\frakg_{\theta-\beta}]=t^{-r-2}
\frakg_{\theta-\beta}$, which belongs to $\Lie P_J$. But since
$t^{r+2}\frakg_{\beta-\theta}\nsubseteq\Lie P_J$, $t^{-r-2}
\frakg_{\theta-\beta}$ indeed belongs to $\Lie P_J^u$. The lemma
follows.
\end{proof}

This lemma may also be reformulated as follows: the induced action
of $\Aut^+(D)$ on $G_J=P_J/P_J^u$ is trivial.

Let
\[X_{A,J}=\{g\in G(F)|\Ad_{g^{-1}}(\wt{A})-t^{-r}d\log(g^{-1})\in
\Lie P_J\}/P_J\subset G(F)/P_J\]
be the deformed Springer fiber in $G(F)/P_J$. By Lemma \ref{A},
this is well-defined. For example, if $J=I_{\on{f}}$, then
$X_{A,J}=Y_A$. The natural projection $\pi_J:\Fl\to G(F)/P_J$
restricts to a map $\pi_J:X_A\to X_{A,J}$.

Let $\wt{\frakg}_J\stackrel{p_J}{\to}\frakg_J$ be the
Grothendieck alteration of $\frakg_J$, which classifies pairs
consisting of a Borel subalgebra of $\frakg_J$ and an element
contained in this subalgebra.

\begin{prop}\label{Groth alteration} There is a natural Cartesian
  diagram
\[\begin{CD}
X_A@>>>[\wt{\frakg}_J/G_J]\\
@V\pi_J VV@VVp_JV\\
X_{A,J}@>\varphi_J>>[\frakg_J/G_J]
\end{CD}\]
\end{prop}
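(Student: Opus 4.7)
The plan is to build both arrows in the square from the projection $\Lie P_J \twoheadrightarrow \frakg_J = \Lie P_J/\Lie P_J^u$ applied to the gauge-twisted element
\[
\xi_g := \Ad_{g^{-1}}(\wt{A}) - t^{-r}d\log(g^{-1}),
\]
and then to verify the Cartesian property by a direct torsor-level computation. The key input is Lemma \ref{A}: for any $p \in P_J$ the term $t^{-r}d\log(p^{-1})$ lies in $\Lie P_J^u$, so the class $\bar\xi_g \in \frakg_J$ depends only on the image $\bar p \in G_J$ of $p$ and transforms by $\Ad(\bar p^{-1})$ under the substitution $g \mapsto gp$.

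To construct $\varphi_J$, I set $\wt{X}_{A,J} := \{g \in G(F) : \xi_g \in \Lie P_J\}$, so that $X_{A,J} = \wt{X}_{A,J}/P_J$ and $\mathcal{Q}_J := \wt{X}_{A,J}/P_J^u$ is a $G_J$-torsor over $X_{A,J}$. The assignment $gP_J^u \mapsto \bar\xi_g$ is then well defined and $G_J$-equivariant (for the $\Ad$-action of $G_J$ on $\frakg_J$), which gives $\varphi_J: X_{A,J} \to [\frakg_J/G_J]$. For the top arrow I use the identification $[\wt{\frakg}_J/G_J] \cong [\frakb_J/B_J]$ coming from $\wt{\frakg}_J = G_J \times^{B_J}\frakb_J$, where $B_J := I/P_J^u$ is a Borel subgroup of $G_J$ and $\frakb_J := \Lie I/\Lie P_J^u = \Lie B_J$. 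Setting $\wt{X}_A := \{g \in G(F) : \xi_g \in \Lie I\}$, the quotient $\wt{X}_A/P_J^u \to X_A$ is a $B_J$-torsor, and $gP_J^u \mapsto \bar\xi_g \in \frakb_J$ is $B_J$-equivariant, providing the top morphism.

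Commutativity of the square is then automatic from the constructions: inducing the $B_J$-torsor $\wt{X}_A/P_J^u$ along $B_J \hookrightarrow G_J$ recovers the pullback $\pi_J^*\mathcal{Q}_J$, and the induced equivariant map to $\frakg_J$ agrees with $\bar\xi$. For the Cartesian property, at the torsor level the fiber product $X_{A,J}\times_{[\frakg_J/G_J]}[\wt{\frakg}_J/G_J]$ is the $G_J$-quotient of the incidence scheme $\{(gP_J^u,\frakb') : \bar\xi_g \in \frakb'\} \subset \mathcal{Q}_J \times_{\frakg_J}\wt{\frakg}_J$. Parametrizing Borels of $\frakg_J$ as $\Ad_{\bar p}\frakb_J$, the condition $\bar\xi_g \in \Ad_{\bar p}\frakb_J$ is equivalent to $\xi_{gp} \in \Lie I$, i.e., $gp \in \wt{X}_A$; after the $G_J$-quotient every class has a canonical representative with $\frakb' = \frakb_J$, and the residual right $B_J$-action on $\wt{X}_A/P_J^u$ yields the quotient $\wt{X}_A/I = X_A$. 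Equivalently and more geometrically, the fiber of $\pi_J: X_A \to X_{A,J}$ over $gP_J$ is the Springer fiber $\{\frakb' : \bar\xi_g \in \frakb'\}$ of $\bar\xi_g$ in $G_J$, which is precisely the fiber of $p_J$.

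The main subtlety is not computational but formal: one must set up the descent through $P_J^u$ and the $G_J$-equivariance carefully, so that Lemma \ref{A} can do the single job it needs to do, namely absorb the inhomogeneous twist $t^{-r}d\log(p^{-1})$ into $\Lie P_J^u$ and hence make it vanish in $\frakg_J$. Once this bookkeeping is in place, the Cartesian property is a repackaging of the Springer-fiber picture over each point of $X_{A,J}$.
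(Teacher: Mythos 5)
Your argument is correct and is essentially the paper's proof in torsor language: both hinge on Lemma \ref{A} absorbing the inhomogeneous term $t^{-r}d\log(p^{-1})$ into $\Lie P_J^u$, on the cocycle identity $\xi_{gp}=\Ad_{p^{-1}}(\xi_g)-t^{-r}d\log(p^{-1})$ to translate $\bar\xi_g\in\Ad_{\bar p}(\frakb_J)$ into $gp\in\wt{X}_A$, and on descending a $G_J$-equivariant identification to the quotient stacks. The paper phrases the same computation as an explicit isomorphism $X_A\times_{X_{A,J}}\wt{X}_{A,J}\cong\wt{X}_{A,J}\times_{\frakg_J}\wt{\frakg}_J$ followed by taking $G_J$-quotients, rather than via the $[\frakb_J/B_J]$ presentation, but the content is the same.
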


\begin{proof} We first construct the morphisms $X_{A,J}\to
  \frakg_J/G_J$. Let $\wt{X}_{A,J}$ be the preimage of $X_{A,J}$ under
the projection $G(F)/P_J^u\to G(F)/P_J$. We have
\[\wt{X}_{A,J}=\{g\in G(F)|\Ad_{g^{-1}}(\wt{A})-t^{-r}
\partial_t(g^{-1})g\in\Lie P_J\}/P^u_J\]
By Lemma \ref{A}, the map
\[gP^u_J\mapsto \Ad_{g^{-1}}(\wt{A})-t^{-r}\partial_t(g^{-1})g \ \mod
\Lie P_J^u\]
is a well define $G_J$-equivariant map $\wt{X}_{A,J}\to
\frakg_J$. This gives the desired map
$\varphi_J:X_{A,J}\to\frakg_J/G_J$.

Let $\wt{X}_A:=X_A\times_{X_{A,J}}\wt{X}_{A,J}$, so that
$\wt{X}_{A}$ classifies the pairs $(gI,g'P_J^u), g,g'\in G(F)$
such that $gP_J=g'P_J$ and
\[\Ad_{g^{-1}}(\wt{A})-t^{-r}\partial_t(g^{-1})g\in\Lie I, \ \
\Ad_{g'^{-1}}(\wt{A})-t^{-r}\partial_t(g'^{-1})g'\in\Lie P_J.\]
On the other hand,
$\wt{\wt{X}}_A:=\wt{X}_{A,J}\times_{\frakg_J}\wt{\frakg}_J$
classifies pairs $(gI,g'P_J^u), g\in P_J, g'\in G(F)$ such that
\[\Ad_{g'^{-1}}(\wt{A})-t^{-r}\partial_t(g'^{-1})g'\in
\Ad_{g}(\Lie I)\subset\Lie P_J.\]

Let $(gI,g'P_J^u)\in\wt{\wt{X}}_A$. We find that
\[\Ad_{(g'g)^{-1}}(\wt{A})-t^{-r}\partial_t((g'g)^{-1})(g'g)=
\Ad_{g^{-1}}(\Ad_{g'^{-1}}(\wt{A})-t^{-r}\partial_t(g'^{-1})g')
-t^{-r}\partial_t(g^{-1})g\]
is in $\Lie I$. This is because
$\Ad_{g'^{-1}}(\wt{A})-t^{-r}\partial_t(g'^{-1})g'\in\Ad_g(\Lie
I)$ and $t^{-r}\partial_t(g^{-1})g\in\Lie P_J^u\subset\Lie I$ by
Lemma \ref{A}. Therefore, $(g'gI,g'P_J^u)\in\wt{X}_A$.
Conversely, if $(gI,g'P_J^u)\in\wt{X}_A$, then
$(g'^{-1}gI,g'P_J^u)\in\wt{\wt{X}}_A$. Therefore, there is a
$G_J$-equivariant isomorphism $\wt{X}_A\to\wt{\wt{X}}_A$
sending $(gI,g'P_J^u)\to (g'^{-1}gI,g'P_J^u)$.

Thus, we obtain a Cartesian diagram
\[\begin{CD}
X_A\times_{X_{A,J}}\wt{X}_{A,J}@>>>\wt{\frakg}_J\\
@VVV@VVV\\
\wt{X}_{A,J}@>>>\frakg_J
\end{CD}\]
where all morphisms are $G_J$-equivariant. The proposition follows
by taking the $G_J$-quotients.
\end{proof}

Let $\underline\bbC$ be the constant sheaf on $X_A$. Then
$(\pi_J)_*\underline\bbC=\varphi_J^*(p_J)_*\underline\bbC$. By the
Springer theory for finite Weyl group, we obtain an action of $W_J$
(the finite Weyl group of $G_J$) on $(\pi_J)_*\underline\bbC$.
Therefore, we obtain a representation of $W_J$ on $H_*(X_A)$.
Following the argument of \cite{Lu} \S 5.5, we obtain that these
representations for all $J\subsetneqq I_{\aff}$ give rise to a
representation of $W_{\aff}$ on $H_*(X_A)$.


\begin{thebibliography}{GKM1}

\bibitem[BV]{BV} Babbitt, D.G; Varadarajan, V.S.: {\it Formal reduction
theory of meromorphic differential equations a group theoretic view},
Pacific J. Math. {\bf 109} (1983) no. 1, 1--80.

\bibitem[BD]{BD} Beilinson, A.; Drinfeld, V.: {\em Opers}, Preprint
math.AG/0501398.

\bibitem[D]{D} Deligne, P.: {\it Equations Diff\'{e}rentielles \'{a}
Points Singuliers r\'{e}guliers}, Lect. Notes in Math. {\bf 163},
Springer 1970.

\bibitem[Fr1]{Fr} Frenkel, E.: {\it Langlands Correspondence for Loop
Groups}, Cambridge Studies in Advanced Mathematics {\bf
103}, Cambridge University Press, Cambridge, 2007.

\bibitem[Fr2]{Fr2} Frenkel, E.: {\it Ramifications of the geometric
Langlands program}, in {\em Representation Theory and Complex
Analysis}, Lecture Notes in Mathematics {\bf 1931}, pp. 51--136,
Springer Verlag 2008 (arXiv:math.QA/0611294).

\bibitem[FB]{FB} Frenkel, E.; Ben-Zvi, D.: {\it Vertex Algebras and
Algebraic Curves}, Second Edition, Mathematical Surveys and Monographs
{\bf 88}, AMS, 2004.

\bibitem[FG]{FG} Frenkel, E.; Gaitsgory, D.: {\it
Local geometric Langlands correspondence and affine Kac-Moody
algebras}, in {\em Algebraic geometry and number theory}, pp.
69--260, Progr. Math. {\bf 253}, Birkh\"{a}user Boston, Boston,
MA, 2006.

\bibitem[GKM1]{GKM} Goresky, M.; Kottwitz, R.; MacPherson, R.: {\it
Homology of affine Springer fibers in the unramified case}, Duke
Math. J. {\bf 121} (2004) no. 3, 509--561.

\bibitem[GKM2]{GKM2} Goresky, M.; Kottwitz; R., MacPherson, R.: {\it
Regular points in affine Springer fibers}, Michigan Math. J. {\bf 53}
(2005), no. 1, 97--107.

\bibitem[Ka]{Ka} Kac, V.: {\it Constructing groups associated to
infinite-dimensional algebras}, in {\em Infinite Dimensional Groups
with Applications}, pp. 167--216, MSRI Publications,
Springer-Verlag, Berlin, 1985.

\bibitem[KL1]{KL1} Kazhdan, D.; Lusztig, G.: {\it A Topological
Approach to Springer's Representations}, Adv. in Math. {\bf 38} (1980)
222--228.

\bibitem[KL2]{KL} Kazhdan, D.; Lusztig, G.: {\it Fixed point varieties
on affine flag manifolds}, Israel J. Math. {\bf 62} (1988) no. 2,
129--168.

\bi[Ko]{Ko} Kostant, B.: {\em Lie group representations on
polynomial rings}, Amer. J. Math. {\bf 85} (1963) 327--402.

\bibitem[Lu]{Lu} Lusztig, G.: {\it Affine Weyl groups and conjugacy
classes in Weyl groups}, Transform. Groups {\bf 1} (1996) Nos. 1-2,
83--97.

\end{thebibliography}
\end{document}